\documentclass[review]{elsarticle}

\usepackage{lineno}
\usepackage{amsmath, amsthm, amscd, amsfonts, amssymb, graphicx}
 \usepackage{array,multirow}
 \usepackage{float}
 \usepackage{soul}
\usepackage[a4paper]{geometry}

\usepackage{mathrsfs}
\usepackage[dvipsnames,table,xcdraw]{xcolor}
\usepackage{xspace}
\usepackage{float}
\usepackage{subfig}
\newtheorem{thm}{Theorem}[section]

\newtheorem{defn}{Definition}[section]

\modulolinenumbers[5]

\usepackage{pifont}
\newcommand{\cmark}{\ding{51}}%
\newcommand{\xmark}{\ding{55}}%
\usepackage{booktabs,dcolumn}
\usepackage{tikz}
\usepackage[colorlinks=true]{hyperref}


\begin{document}
\makeatletter
\def\ps@pprintTitle{%
   \let\@oddhead\@empty
   \let\@evenhead\@empty
   \let\@oddfoot\@empty
   \let\@evenfoot\@oddfoot
}
\makeatother
\begin{frontmatter}

\title{Three-species predator-prey model with respect to Caputo and Caputo-Fabrizio fractional operators}

\author[add1]{Leila Eftekhari}
\ead{leila.eftekhari32@gmail.com}
\address[add1]{Department of Mathematics,  Tarbiat Modares University, Tehran, Iran.}

\author[addM]{Moein Khalighi\corref{cor1}}
\ead{moein.khalighi@utu.fi }
\address[addM]{Department of Future Technologies,  University of Turku, Turku, Finland.}

\author[add2]{Soleiman Hosseinpour}
\ead{soleiman.hosseinpour@gmail.com}
\address[add2]{Department of Applied Mathematics, Shahrood University of Technology, Shahrood, Iran.}

\author[addM]{Leo Lahti}
\ead{leo.lahti@utu.fi}

\cortext[cor1]{Corresponding author}
\begin{abstract}
We study distributed lag effects in three-dimensional Lotka-Volterra systems by applying the concept of fractional calculus. We derive a new numerical method that provides enhanced stability for the Caputo-Fabrizio operator based on Adams-Bashforth method, considering non-singular kernel in the definition of Caputo-Fabrizio operator. We investigate the stability conditions of this system with comparisons to the Caputo fractional derivative. Numerical results show that the type of differential operators and the value of orders significantly influence the stability of the numerical solution, and dynamics of the Lotka-Volterra system.

\end{abstract}
\begin{keyword}
Caputo-Fabrizio operator; Lotka-Volterra differential equations; Adam-Bashforth method; Stability analysis.
\end{keyword}

\end{frontmatter}

\section{Introduction}

The use of fractional calculus has rapidly increased in many fields of science and engineering~\cite{sun2018new,almeida2016modeling}.  
Fractional calculus brings more degrees of freedom for differentiation in the modeling of various phenomena, such as complex networks~\cite{PhysRevE.95.022409,10.1371/journal.pone.0154983}, optimal control problems \cite{soli1, soli2}, and Viscoelastic systems~\cite{matlob2019concepts}. Such flexible differential operators do not have unique definitions, however. The Grunwald-Letnikov, Riemann-Liouville, and Caputo definitions are examples of commonly used approaches that have been used in science and engineering. 

Here, we study the Caputo-Fabrizio (CF) fractional operator that at first comes from the definition of Caputo fractional derivative~\cite{fabrizio1}. It replaces the singular kernel of the Caputo derivative with an exponential function and has two representations for the temporal and spatial variables~\cite{fabrizio2}. This approach has been used, for instance, to model the behavior of the diffusion-convection equation, fractional Nagumo equation, and to control the wave on a shallow water \cite{apfabrizio, apfabrizio2}. The new operator has also been successfully applied in cancer treatment, HIV/AIDS infection, and tumor-obesity model
\cite{can, hiv, tom}. In \cite{ecofab}, a comprehensive overview of the CF operator has been conducted, showing that this operator is applied in economic and physical models.

Another significant application is provided by the classical Lotka-Volterra systems, which are sometimes called predator-prey or parasite-host equations. Such systems play a remarkable role in mathematical biology \cite{DAS20111}, and in financial systems, for example, biunivoc capital transfer from mother bank to subsiding bank and from subsiding bank to individuals or companies \cite{bank}. At first, these models were introduced independently by Alfred J. Lotka and Vito Volterra as a  simplified model of two species predator-prey population dynamics \cite{vol}. Whereas the classical formulation uses an integer-order differential, this is not always optimal due to the nonlocality of the interactions and the potential existence of memory or lag effects in the real systems. In 2007, Ahmed \textit{et al.} \cite{fvol} have introduced the fractional-order Lotka-Volterra system. Recent studies have generalized Lotka-Volterra models to  two-predator one-prey dynamics~\cite{twolot} and analysed a Lotka-Volterra fractional-order model using the Caputo fractional derivative \cite{cavol, AMIRIAN2020e04816}.

However, due to the appearance of a singularity in the definition of the Caputo fractional derivative, this operator is impractical for the modeling of some nonlocal dynamics~\cite{ghalib2020analytical}. Hence, the new nonsingular CF fractional operator has been proposed to overcome this shortcoming. Recently, scholars have examined the efficiency of this fractional framework through some real practical cases and provided more accurate parameter fitting than the classic integer and noninteger order models~\cite{baleanu2020new}. Importantly, Tarasov~\cite{ecofab} has shown that a system with the CF operator, in contrast to the Caputo fractional derivatives, cannot describe processes with memory effects but can suitably model processes with continuously distributed lag.

Motivated by the above discussion, we consider three-dimensional Lotka-Volterra differential equations described by the CF operator. We formulate a new corrected numerical method to solve the CF system.
We analyse the stability properties of the Lotka-Volterra model under Caputo and CF fractional operators. Consequently, we reveal new insights from differences in the stability region of these frameworks. To investigate the impact of different stability properties on the dynamic, we compare and illustrate some example cases.

\section{Definitions}\label{pre}

\subsection{Fractional calculus}
In this section, we outline the key definitions for fractional derivatives. The fractional derivative in the sense of Caputo is defined as follow \cite{kilbas2006theory}:
 
\begin{align}\label{eq1}
{}^{\text{C}}_{}D^{ \alpha}_{a^+} f(t)=\frac{1}{\Gamma(1-\alpha)}\int_{a}^{t}(t-\tau)^{-\alpha }f^\prime(\tau)d\tau,~~ 0<\alpha<1,~ t> a.
\end{align}
The kernel $ (t-\tau)^{-\alpha} $ in Eq. \eqref{eq1} cause a singularity at $ t=\tau $ that can be considered as a drawback in this definition. In 2015, Caputo and Fabrizio defined the following fractional derivative as \cite{fabrizio2}
\begin{align}\label{f1}
{}^{\text{CF}}_{}D^{ \alpha}_{a^+} f(t)=\frac{M(\alpha)}{1-\alpha}\int_a^t \exp(-\frac{\alpha}{1-\alpha}(t-\tau))f^\prime(\tau)d\tau, ~~t\geq a, 
\end{align} 
where $ f $ is a continuous and differentiable function on $ C^1[a, b]$ and $ M(\alpha) $ is a normalization  function such that $ M(0)=M(1)=1 $.

For more details on the above-mentioned fractional operators,  the readers are referred to~\cite{fabrizio1,fabrizio2}.
\subsection{Stability of the fractional-order system}
In this part of the paper, we recall some basic theorems to proceed with our goal on the stability of the Lotka-Volterra system with the CF operator.

\begin{defn}
The autonomous system, with $x(t_0)=x_0$
is asymptotically stable if and only if $\displaystyle\lim_{t\to +\infty}\| x(t)\|=0, $ where $ \Vert.\Vert $ is Euclidean norm.
\end{defn}

Consider the linear fractional-order autonomous system as follows
\begin{equation}\label{e11}
D^{\alpha}_{a^+}x(t)=Ax(t),
\end{equation}
where $x(t)\in \mathbb{R}^n$, $A\in\mathbb{R}^{n\times n}$, $0< \alpha< 1$ and $ D^{\alpha}_{a^+} $ is one of Caputo or CF operators.

\begin{thm}
The linear autonomous system \eqref{e11} with Caputo fractional  derivative for $ 0< \alpha< 1 $ is asymptotically stable if and only if 
$\left| \mathrm{arg} (\mathrm{spec}(A))\right| >\dfrac{\alpha \pi}{2}$, $\mathrm{spec}(A)$ is the spectrum (set of all eigenvalues) of $A$ \cite{Matignon96stabilityresults}. 
\end{thm}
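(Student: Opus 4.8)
The plan is to reduce the matrix problem to a family of scalar problems indexed by the eigenvalues of $A$, and then to analyse each scalar case through the large-argument asymptotics of the Mittag-Leffler function. First I would apply the Laplace transform to \eqref{e11}. Using the transform rule for the Caputo derivative, $\mathcal{L}\{{}^{\mathrm{C}}D^\alpha x\}(s) = s^\alpha X(s) - s^{\alpha-1} x(0)$, the system becomes $(s^\alpha I - A) X(s) = s^{\alpha-1} x_0$, so $X(s) = s^{\alpha-1}(s^\alpha I - A)^{-1} x_0$. Inverting the transform gives the closed-form solution $x(t) = E_\alpha(A t^\alpha)\, x_0$, where $E_\alpha(z) = \sum_{k=0}^\infty z^k / \Gamma(\alpha k + 1)$ is the one-parameter Mittag-Leffler function extended to a matrix argument. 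Asymptotic stability is therefore equivalent to $\|E_\alpha(A t^\alpha)\| \to 0$ as $t \to +\infty$.

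Second, I would pass to the Jordan canonical form $A = P J P^{-1}$. Since $E_\alpha(A t^\alpha) = P\, E_\alpha(J t^\alpha)\, P^{-1}$ and $P, P^{-1}$ are fixed invertible matrices, the decay of $\|x(t)\|$ is governed block-by-block and is insensitive to the similarity transform. On a Jordan block with eigenvalue $\lambda$ of size $m$, the entries of $E_\alpha(J t^\alpha)$ are finite linear combinations of $E_\alpha(\lambda t^\alpha)$ and its derivatives $E_\alpha^{(j)}(\lambda t^\alpha)$ for $0 \le j \le m-1$, each carrying a polynomial factor $t^{\alpha j}$. Thus the matrix criterion reduces to determining, for every eigenvalue $\lambda$, when $t^{\alpha j} E_\alpha^{(j)}(\lambda t^\alpha) \to 0$.

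Third, I would invoke the classical asymptotic expansion of the Mittag-Leffler function, which exhibits a two-regime behaviour for large $|z|$. Inside the sector $|\arg(z)| \le \alpha\pi/2$ the expansion contains the dominant exponential term $\tfrac{1}{\alpha} z^{(1-\alpha)/\alpha} \exp(z^{1/\alpha})$, whereas outside it, for $\alpha\pi/2 < |\arg(z)| \le \pi$, only the algebraically decaying tail $-\sum_{k\ge 1} z^{-k}/\Gamma(1-\alpha k)$ survives. Writing $z = \lambda t^\alpha$, the exponent is $z^{1/\alpha} = \lambda^{1/\alpha} t$, with $\mathrm{Re}(\lambda^{1/\alpha}) = |\lambda|^{1/\alpha}\cos(\arg(\lambda)/\alpha)$. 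The sign of this real part changes exactly at $|\arg(\lambda)| = \alpha\pi/2$: for $|\arg(\lambda)| > \alpha\pi/2$ the cosine is negative, the exponential (and all derivative corrections) decay, and $\|x(t)\| \to 0$; for $|\arg(\lambda)| < \alpha\pi/2$ the exponential grows and stability fails; the critical ray $|\arg(\lambda)| = \alpha\pi/2$ leaves a non-decaying oscillation. Collecting these conditions over all $\lambda \in \mathrm{spec}(A)$ yields the stated iff criterion.

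The main obstacle is the careful handling of the Mittag-Leffler asymptotics along the Stokes lines $|\arg(z)| = \alpha\pi/2$ together with the polynomial prefactors coming from nontrivial Jordan blocks. One must verify that the algebraic factors $t^{\alpha j}$ cannot overcome genuine exponential decay (which they do not, since exponential decay dominates any fixed power of $t$), and, on the critical ray, that the residual bounded oscillation genuinely obstructs $\|x(t)\| \to 0$, so that the boundary case is correctly excluded from the stability region. The rest of the argument is bookkeeping over the spectrum.
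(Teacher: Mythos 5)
The paper contains no proof of this theorem to compare against: it is quoted directly from Matignon's 1996 stability results, with the citation \cite{Matignon96stabilityresults} standing in for the argument. What you have written is a reconstruction of the standard proof of Matignon's theorem, and in its essentials it is sound: the Laplace-transform step giving $x(t)=E_\alpha(At^\alpha)x_0$, the Jordan-form reduction to the scalar quantities $t^{\alpha j}E_\alpha^{(j)}(\lambda t^\alpha)$, and the sectorial asymptotics of the Mittag-Leffler function are exactly the right ingredients, and the boundary ray $|\arg\lambda|=\alpha\pi/2$ is correctly excluded because there $|E_\alpha(\lambda t^\alpha)|\to 1/\alpha\neq 0$. (For the ``only if'' direction you should also say explicitly that one takes $x_0$ to be an eigenvector of an offending eigenvalue, so that the non-decaying scalar mode is actually excited.)

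Two technical points need repair. First, a cosmetic one: for $E_{\alpha,1}$ the dominant term in the sector is $\tfrac{1}{\alpha}\exp(z^{1/\alpha})$; the prefactor $z^{(1-\alpha)/\alpha}$ you wrote comes from the general two-parameter expansion $\tfrac{1}{\alpha}z^{(1-\beta)/\alpha}\exp(z^{1/\alpha})$ and equals $1$ when $\beta=1$. Second, and more substantively: your claim that for $|\arg\lambda|>\alpha\pi/2$ ``the cosine is negative, the exponential \ldots decays'' is false in general, because $\cos(\arg\lambda/\alpha)$ can be positive again when $|\arg\lambda|/\alpha>3\pi/2$ (take $\lambda<0$ real and $\alpha<2/3$, so that $\arg\lambda/\alpha = \pi/\alpha$ exceeds $3\pi/2$). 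The correct mechanism in that regime is not that the exponential term decays but that it is absent altogether: the expansion containing $\exp(z^{1/\alpha})$ is valid only in a sector $|\arg z|\le\mu$ with $\alpha\pi/2<\mu<\min(\pi,\alpha\pi)$, and for $\mu\le|\arg z|\le\pi$ only the algebraic tail $-\sum_{k\ge 1}z^{-k}/\Gamma(1-\alpha k)=O(|z|^{-1})$ remains (this is the Stokes phenomenon you allude to). Since you already stated this two-regime dichotomy correctly one sentence earlier, the fix is simply to route the stability direction through it: for $|\arg\lambda|>\alpha\pi/2$ one gets $t^{\alpha j}E_\alpha^{(j)}(\lambda t^\alpha)=O(t^{-\alpha})$ for every $j$, so Jordan blocks are harmless and $\|x(t)\|\to 0$; the cosine-sign computation should be reserved for the instability direction $|\arg\lambda|<\alpha\pi/2$ and for the critical ray, where it is valid because there $|\arg\lambda|/\alpha$ lies in $[0,\pi/2]$.
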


\begin{thm}\label{thr1}
The system \eqref{e11} with CF operator is asymptotically stable if eigenvalues $\lambda(A)$ of the matrix A satisfy one of the following conditions  \cite{stability}
\begin{itemize}
\item[1)]
$\| (\lambda(A))\| \geqslant\dfrac{1}{1-\alpha}, \lambda(A)\neq \dfrac{1}{1-\alpha},$
\item[2)]
$\mathrm{Re}(\lambda(A))>\dfrac{1}{1-\alpha},$
\item[3)]
$\mathrm{Re}(\lambda(A))< 0,$
\item[4)]
$\left| \mathrm{Im}(\lambda(A))\right| >\dfrac{1}{2(1-\alpha)}.$
\end{itemize}
\end{thm}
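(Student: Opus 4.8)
The plan is to pass to the Laplace transform, reduce \eqref{e11} to a scalar problem for each eigenvalue of $A$, and then convert the pole-location requirement into a region in the $\lambda$-plane. First I would record the Laplace transform of the Caputo--Fabrizio derivative \eqref{f1}. Taking the normalization $M(\alpha)=1$ and writing the kernel as the exponential $\exp(-\tfrac{\alpha}{1-\alpha}t)$, the derivative is a convolution of that kernel with $f'$, so
$$\mathcal{L}\{{}^{\text{CF}}D^{\alpha}_{a^+}f\}(s)=\frac{1}{1-\alpha}\cdot\frac{sF(s)-f(a)}{s+\frac{\alpha}{1-\alpha}}=\frac{sF(s)-f(a)}{(1-\alpha)s+\alpha}.$$
Applying this to \eqref{e11} yields $\bigl[(I-(1-\alpha)A)s-\alpha A\bigr]X(s)=x(a)$, so the poles of $X(s)$ are the roots of $\det\bigl[(I-(1-\alpha)A)s-\alpha A\bigr]=0$. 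Factoring through the eigenvalues $\lambda$ of $A$, each contributes a single pole located at $s(\lambda)=\dfrac{\alpha\lambda}{1-(1-\alpha)\lambda}$.

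By the definition of asymptotic stability above, the system is asymptotically stable precisely when every such pole lies in the open left half-plane, i.e. $\mathrm{Re}\,s(\lambda)<0$ for all eigenvalues $\lambda$. Writing $\lambda=a+bi$ and multiplying numerator and denominator of $s(\lambda)$ by the conjugate of the denominator, a short computation gives
$$\mathrm{Re}\,s(\lambda)=\frac{\alpha\bigl(a-(1-\alpha)(a^2+b^2)\bigr)}{\bigl(1-(1-\alpha)a\bigr)^2+(1-\alpha)^2b^2}.$$
The denominator is strictly positive except at the excluded value $\lambda=\frac{1}{1-\alpha}$, so the sign of $\mathrm{Re}\,s(\lambda)$ is that of its numerator, and the exact stability criterion becomes $\mathrm{Re}(\lambda)<(1-\alpha)\|\lambda\|^2$. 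Completing the square, this is equivalent to
$$\Bigl(\mathrm{Re}(\lambda)-\frac{1}{2(1-\alpha)}\Bigr)^2+\bigl(\mathrm{Im}(\lambda)\bigr)^2>\frac{1}{4(1-\alpha)^2},$$
that is, $\lambda$ lies strictly outside the disk centred at $\frac{1}{2(1-\alpha)}$ on the real axis with radius $\frac{1}{2(1-\alpha)}$.

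It then remains to check that each of the four listed conditions forces $\lambda$ into this exterior. The disk touches the origin, has rightmost point $\frac{1}{1-\alpha}$, and reaches extreme imaginary parts $\pm\frac{1}{2(1-\alpha)}$; hence condition 3 (left half-plane), condition 2 (to the right of the disk), and condition 4 (above or below it) are immediate. For condition 1 I would observe that the farthest point of the closed disk from the origin is its rightmost point $\frac{1}{1-\alpha}$, so every other point of the closed disk has modulus strictly below $\frac{1}{1-\alpha}$; therefore $\|\lambda\|\geqslant\frac{1}{1-\alpha}$ together with $\lambda\neq\frac{1}{1-\alpha}$ again places $\lambda$ strictly outside. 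Each condition thus implies $\mathrm{Re}\,s(\lambda)<0$, giving asymptotic stability.

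The main obstacle is the bookkeeping in the first step: obtaining the Laplace transform of the non-singular CF kernel and the pole map $s(\lambda)$ with the correct coefficient, including the effect of the normalization $M(\alpha)$, since the entire geometric picture hinges on the constant $\frac{1}{1-\alpha}$. Once $\mathrm{Re}\,s(\lambda)<0$ has been reduced to the clean inequality $\mathrm{Re}(\lambda)<(1-\alpha)\|\lambda\|^2$, identifying the forbidden disk and verifying the four sufficient conditions is routine plane geometry.
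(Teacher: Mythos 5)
Your argument is essentially correct, but the first thing to note is that the paper never proves this theorem at all: it is imported verbatim from the cited reference \cite{stability}, and the paper's later stability theorem for the Lotka--Volterra equilibria simply points back to it. So your Laplace-transform derivation is not a variant of the paper's proof --- it is the only proof on offer, and it actually establishes more than the statement. The pole map $s(\lambda)=\frac{\alpha\lambda}{1-(1-\alpha)\lambda}$ and the reduction of $\mathrm{Re}\,s(\lambda)<0$ to $\mathrm{Re}(\lambda)<(1-\alpha)\|\lambda\|^{2}$ give the \emph{exact} stability region, namely the exterior of the closed disk of radius $\frac{1}{2(1-\alpha)}$ centred at $\frac{1}{2(1-\alpha)}$ on the real axis, of which the theorem's four conditions are merely convenient sufficient sub-regions. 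This is precisely the geometry drawn in Fig.~\ref{fig:compare}; your computation also confirms that the centre lies on the \emph{real} axis, correcting the prose of Sec.~\ref{imp}, which misstates it as $(0,\frac{1}{2(1-\alpha)})$.

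Two points would tighten the write-up, though neither affects the verdict. First, you take $M(\alpha)=1$; for a general normalization the critical constant becomes $\frac{M(\alpha)}{1-\alpha}$, so you should state explicitly that the theorem as phrased presumes $M(\alpha)=1$ (as is standard in this literature). Second, the claim that ``each eigenvalue contributes a single pole'' is literally true only for diagonalizable $A$; a defective eigenvalue produces a higher-order pole at the same location $s(\lambda)$, hence polynomial-times-exponential modes, which still decay when $\mathrm{Re}\,s(\lambda)<0$ --- so the conclusion survives, but the sentence needs a Jordan-block remark. With those two repairs, the chain is complete: each of the four conditions places every eigenvalue strictly outside the closed disk (your condition-1 argument, that the disk's unique farthest point from the origin is $\frac{1}{1-\alpha}$, is the right one), hence every pole lies in the open left half-plane, hence $\|x(t)\|\to 0$.
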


\section{Modeling and its stability}\label{main}
One can find the stability conditions of this system with Caputo derivative in~\cite{reft}. In this section, we investigate the stability of the Lotka-Volterra system involving the CF operator and Table \ref{t1} summarizes all the required conditions for the stability in respect to the both operators.  
\subsection{Fractional Lotka-Volterra model}
In this study, we consider a three-species Lotka-Volterra model as follows 
\begin{align}\label{e1}
\begin{cases}
D^{\alpha}_{a^+}x(t)=x(t)(a_1-a_2x(t)-y(t)-z(t))\\
D^{\alpha}_{a^+}y(t)=y(t)((1-a_3)+a_4x(t))\\
D^{\alpha}_{a^+}z(t)=z(t)((1-a_5)+a_6x(t)+a_7y(t))	
\end{cases}
\end{align}
where $ 0<\alpha\leq1 $ and $ a_i>0, i=0, 1, \ldots, 7 $ and $ {}_0D^{\alpha}_t $ is one of the differential operators Caputo or CF with initial conditions 
\begin{align}
x(0)=x_0,~~ y(0)=y_0,~~ z(0)=z_0,
\end{align}
where $ x_0, y_0, z_0 \in \mathbb{R}^+. $ In this model $ x(t)\geq 0 $ represents the population of the prey, $ y(t)\geq 0$ and $ z(t)\geq 0 $ represent the population of predators at the time $ t $.

Now we consider system \eqref{e1} in a compact form as follows
\begin{align}\label{ca1}
\begin{cases}
D^{\alpha}_{a^+}\textbf{u}(t)=\textbf{F}(\textbf{u}(t))\qquad 0< t<\infty\\
\textbf{u}(0)=\textbf{u}_0,
\end{cases}
\end{align}
where $\textbf{u}(t)={(x(t),y(t),z(t))^{T}}\in\mathscr{L}[0,t']$, where $\mathscr{L}[0,t']$ be the set of all continuous vector $ \textbf{u}(t) $ defined on the interval $[0,t']$ ($t'	>0$) and  $\textbf{F}$ is a real-valued continuous vector function. Then system \eqref{e1} can be written in the form
\begin{align*}
D^{\alpha}_{a^+} \textbf{u}(t)=\textbf{A}\textbf{u}(t)+x(t)\textbf{B}\textbf{u}(t)+y(t)\textbf{C}\textbf{u}(t)+z(t)\textbf{D}\textbf{u}(t)
\end{align*}
where
\begin{align*}
\textbf{A}=\begin{bmatrix}
a_1& 0&0\\
0& 1-a_3& 0\\
0&0&1-a_5
\end{bmatrix},~~
\textbf{B}=\begin{bmatrix}
-a_2&0&0\\
0&a_4&0\\
0&0&a_6
\end{bmatrix},~~
\textbf{C}=\begin{bmatrix}
-1& 0&0\\
0& 0&0\\
0&0&a_7
\end{bmatrix},~~
\textbf{D}=\begin{bmatrix}
-1&0&0\\
0&0&0\\
0&0&0
\end{bmatrix}.
\end{align*}

\begin{thm}
For $\textbf{u}(t)\in\mathscr{L}[0,t']$, system \eqref{ca1} has a unique solution.
\end{thm}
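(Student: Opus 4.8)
The plan is to recast the fractional initial-value problem \eqref{ca1} as an equivalent fixed-point equation on the Banach space $\mathscr{L}[0,t']$ equipped with the supremum norm, and then to invoke the Banach contraction principle. Since the statement allows $D^{\alpha}_{a^+}$ to be either the Caputo or the CF operator, I would treat both cases in parallel, the only difference being the integral representation used to undo the fractional derivative.

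First I would convert \eqref{ca1} into a Volterra-type integral equation by applying the appropriate fractional integral operator to both sides. For the Caputo derivative this yields
\begin{align*}
\textbf{u}(t)=\textbf{u}_0+\frac{1}{\Gamma(\alpha)}\int_0^t (t-s)^{\alpha-1}\textbf{F}(\textbf{u}(s))\,ds,
\end{align*}
while for the CF operator, inverting the exponential-kernel convolution in \eqref{f1} (e.g.\ via the Laplace transform and imposing $\textbf{u}(0)=\textbf{u}_0$) gives the corrected representation
\begin{align*}
\textbf{u}(t)=\textbf{u}_0+\frac{1-\alpha}{M(\alpha)}\bigl(\textbf{F}(\textbf{u}(t))-\textbf{F}(\textbf{u}_0)\bigr)+\frac{\alpha}{M(\alpha)}\int_0^t \textbf{F}(\textbf{u}(s))\,ds.
\end{align*}
In either case I would define the operator $(T\textbf{u})(t)$ to be the right-hand side, so that a fixed point of $T$ is exactly a continuous solution of \eqref{ca1}, and a unique fixed point yields the claimed unique solution.

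The core analytic step is a Lipschitz estimate on $\textbf{F}$. Because $\textbf{F}(\textbf{u})=\textbf{A}\textbf{u}+x\textbf{B}\textbf{u}+y\textbf{C}\textbf{u}+z\textbf{D}\textbf{u}$ is quadratic in the components of $\textbf{u}$, it is only locally Lipschitz; hence I would restrict attention to a closed ball $B_R=\{\textbf{u}\in\mathscr{L}[0,t']:\|\textbf{u}-\textbf{u}_0\|\leq R\}$, on which there is a constant $L=L(R)$ with $\|\textbf{F}(\textbf{u})-\textbf{F}(\textbf{v})\|\leq L\|\textbf{u}-\textbf{v}\|$. With this bound I would verify the two hypotheses of the contraction principle: that $T$ maps $B_R$ into itself, by estimating $\|T\textbf{u}-\textbf{u}_0\|$ using $M=\sup_{B_R}\|\textbf{F}\|$, and that $T$ is a contraction, by showing the resulting constant is strictly less than one.

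I expect the contraction estimate to be the main obstacle, for two reasons. Since the nonlinearity is only locally Lipschitz, the constant $L(R)$ grows with $R$, so the self-map and contraction requirements pull against each other; the standard remedies are either to shrink the interval length $t'$, to pass to a weighted (Bielecki) norm $\|\textbf{u}\|_\beta=\sup_t e^{-\beta t}\|\textbf{u}(t)\|$, or (for the weakly singular Caputo kernel) to iterate $T$ and invoke Weissinger's fixed-point theorem via a Mittag-Leffler-type bound. For the CF case there is an additional subtlety: the non-integral term $\tfrac{1-\alpha}{M(\alpha)}\textbf{F}(\textbf{u}(t))$ is not smoothed by integration, so its contribution to the Lipschitz constant is $\tfrac{(1-\alpha)L}{M(\alpha)}$ and cannot be made small by shrinking $t'$; handling it requires either assuming $(1-\alpha)L/M(\alpha)<1$ or first solving for $\textbf{u}(t)$ to absorb this term before estimating. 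Once a contraction is secured, Banach's theorem supplies the unique fixed point in $B_R$, which is the asserted unique solution in $\mathscr{L}[0,t']$.
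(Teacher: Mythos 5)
Your proposal is correct, and its analytic core coincides with the paper's proof: both hinge on the quadratic nonlinearity $\textbf{F}$ satisfying a Lipschitz estimate $\left\| \textbf{F}(\textbf{u})-\textbf{F}(\textbf{v})\right\| \leqslant L\left\| \textbf{u}-\textbf{v}\right\|$ whose constant depends on a priori bounds for $\|\textbf{u}\|$ and $\|\textbf{v}\|$ (the paper's constants $M_1,M_2$ play exactly the role of your $L(R)$ on the ball $B_R$). The routes diverge after that estimate. The paper stops there: having verified the Lipschitz property, it simply asserts that problem \eqref{ca1} has a unique solution, with no integral reformulation, no fixed-point argument, and no distinction between the Caputo and CF cases. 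You actually chart that missing passage: the equivalent Volterra integral equation for the Caputo case, the exponential-kernel inversion for the CF case, and a Banach contraction argument on $B_R$, together with the standard remedies (shrinking $t'$, a Bielecki norm, or Weissinger's theorem) for reconciling the self-map and contraction requirements. Most importantly, you isolate a genuine subtlety that the paper's one-sentence conclusion glosses over: in the CF reformulation the term $\frac{1-\alpha}{M(\alpha)}\textbf{F}(\textbf{u}(t))$ is not smoothed by integration, so its contribution $\frac{(1-\alpha)L}{M(\alpha)}$ to the contraction constant cannot be reduced by shortening the interval; uniqueness for the CF system therefore requires a smallness condition such as $(1-\alpha)L/M(\alpha)<1$ (or an absorption step), a hypothesis the paper never acknowledges. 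In short, the paper's version buys brevity at the cost of leaving the step from ``Lipschitz'' to ``unique solution'' implicit; your plan is precisely that step made explicit, and it correctly flags that for the CF operator this step is not routine.
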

\begin{proof} 

Let
 $ \textbf{F}(\textbf{u}(t))=\textbf{A}\textbf{u}(t)+u_1(t)\textbf{B}\textbf{u}(t)+u_2(t)\textbf{C}\textbf{u}(t)+u_3(t)\textbf{D}\textbf{u}(t) $ and  $ \textbf{F}(\textbf{v}(t))=\textbf{A}\textbf{v}(t)+v_1(t)\textbf{B}\textbf{v}(t)+v_2(t)\textbf{C}\textbf{v}(t)+v_3(t)\textbf{D}\textbf{v}(t) $ where $\textbf{F}(\textbf{u}(t))$, $\textbf{F}(\textbf{v}(t))\in\mathscr{L}[0,t']$. Since $ \textbf{u}(t)=(u_1,u_2,u_3) $, $ \textbf{v}(t)=(v_1,v_2,v_3)\in\mathscr{L}[0,t'] $ such that $ \textbf{u}(t)\neq \textbf{v}(t) $. The following inequality holds
{\small
\begin{align*}
&\left\| \textbf{F}(\textbf{u}(t))- \textbf{F}(\textbf{v}(t))\right\|\\
&=\bigg\| \textbf{A}\textbf{u}(t)+ u_1(t)\textbf{B}\textbf{u}(t)+u_2(t)\textbf{C}\textbf{u}(t)+u_3(t)\textbf{D}\textbf{u}(t)-(\textbf{A}\textbf{v}(t)+v_1(t)\textbf{B}\textbf{v}(t)+v_2(t)\textbf{C}\textbf{v}(t)+v_3(t)\textbf{D}\textbf{v}(t))\bigg\|\\
&\leqslant \left\| \textbf{A}(\textbf{u}(t)-\textbf{v}(t))\right\| +\left\| u_1(t)\textbf{B}(\textbf{u}(t)-\textbf{v}(t))\right\| +\left\| (u_1(t)-v_1(t))\textbf{B}\textbf{v}(t)\right\| +\left\| u_2(t) \textbf{C}(\textbf{u}(t)-\textbf{v}(t))\right\|\\
&+\left\| (u_2(t)-v_2(t))\textbf{C}\textbf{v}(t)\right\| +\left\| u_3(t)\textbf{D}(\textbf{u}(t)-\textbf{v}(t))\right\| + \left\| (u_3(t)-v_3(t))\textbf{D}\textbf{v}(t)\right\|\\
& \leqslant \bigg[\|\textbf{A}\|+\|\textbf{B}\| \left( |u_1(t)| +\| \textbf{v}(t)\| \right) +\|\textbf{C}\| \left( \left| u_2(t)\right| +\left\| \textbf{v}(t)\right\| \right) +\|\textbf{D}\| \left(\left| u_3(t)\right| +\left\| \textbf{v}(t)\right\| \right) \bigg]\times \left\| \textbf{u}(t) -\textbf{v}(t)\right\|,
\end{align*}}
then we have
\[\left\| \textbf{F}(\textbf{u}(t))-\textbf{F}(\textbf{v}(t))\right\| \leqslant \textbf{L}\left\| \textbf{u}(t)-\textbf{v}(t)\right\|,\]
where
\[\textbf{L}=\|\textbf{A}\|+(\|\textbf{B}\| +\|\textbf{C}\|+\|\textbf{D}\|)(M_1+M_2)>0,\]
and $ M_1 $  and $ M_2 $ are positive constant and satisfy $ \Vert \textbf{u}\Vert\leq M_1 $, $ \Vert \textbf{v}\Vert\leq M_2 $ as a aresult of $ \textbf{u}, \textbf{v} \in\mathscr{L}[0,t']. $ It means that $ \textbf{F} (X(t)) $ is continuous and satisfying Lipschitz condition, then
the initial value problem \eqref{ca1} has a unique solution.
\end{proof}

\subsection{Stability of the model}
In this subsection, we discuss the stability of non-linear Lotka-Volterra differential equations \eqref{e1} described by the CF operator. In the case of non-linear systems, we study the local stability of equilibrium points and the following theorems are presented to investigate the local stability of equilibrium points. In order to determine the equilibrium points of system \eqref{e1}, let us consider
\[D^{\alpha}_{a^+}x(t)=0,~~~ D^{\alpha}_{a^+}y(t)=0, ~~~D^{\alpha}_{a^+}z(t)=0.\]
The  equilibrium points of system \eqref{e1} are obtained and denoted as
\begin{align*}
&\varepsilon_0 =(0,0,0), \\
&\varepsilon_1=(\dfrac{a_1}{a_2}, 0,0), \\
&\varepsilon_2=\left( \dfrac{a_5-1}{a_6},0, \dfrac{a_1a_6-a_2(a_5-1)}{a_6}\right),\\
&\varepsilon_3 =\left(\dfrac{a_3-1}{a_4}, \dfrac{a_1a_4-a_2(a_3-1)}{a_4}, 0\right),\\
&\varepsilon_4 =\left(\dfrac{a_3-1}{a_4}, \dfrac{a_4(a_5-1)-a_6(a_3-1)}{a_7a_4}, \dfrac{a_4(1+a_1a_7-a_5)+(a_6-a_2a_7)(a_3-1)}{a_7a_4}\right).
\end{align*}
To adjust the conditions for the actual situations, the equilibrium points must be nonnegative. In this regard, it is obvious that
$ \varepsilon_0 $ and $ \varepsilon_1 $ always exist, and $\varepsilon_2$ exists when 
$a_3\geq 1$ and $a_1a_4\geq a_2(a_3-1)$, and it happens for $\varepsilon_3$ when 
$a_5\geq 1$ and $a_1a_6\geq a_2(a_5-1)$. Finally, the conditions $a_3\geq 1$, $a_4(a_5-1)\geq a_6(a_3-1)$ and $a_4\geq\dfrac{(a_2a_7-a_6)(a_3-1)}{(1+a_1a_7-a_5)}$ (or if $(1+a_1a_7-a_5)<0$ then $a_4\leq\dfrac{(a_6-a_2a_7)(a_3-1)}{(1+a_1a_7-a_5)}$, else if $(1+a_1a_7-a_5=0$ then $a_6>a_2a_7$) are necessary for the existence of $ \varepsilon_4 $.

\begin{thm}
Let $ \varepsilon^* $ be an equilibrium point of the nonlinear system \eqref{e1}, with CF operator, then  equilibrium point $ \varepsilon^* $ is asymptotically stable if eigenvalues of Jacobian matrix $\lambda(J(\varepsilon^*))$ satisfy one of the following conditions 
\begin{itemize}
\item[1)]
$\|\lambda (J(\varepsilon^*))\| \geqslant\dfrac{1}{1-\alpha}, \lambda(J(\varepsilon^*))\neq \dfrac{1}{1-\alpha},$
\item[2)]
$\mathrm{Re}(\lambda(J(\varepsilon^*)))>\dfrac{1}{1-\alpha},$
\item[3)]
$\mathrm{Re}(\lambda(J(\varepsilon^*)))< 0,$
\item[4)]
$\left| \mathrm{Im}(\lambda(J(\varepsilon^*)))\right| >\dfrac{1}{2(1-\alpha)}.$
\end{itemize}
\end{thm}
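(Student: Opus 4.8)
The plan is to reduce this nonlinear stability question to the linear criterion of Theorem \ref{thr1} through the fractional analogue of Lyapunov's indirect (linearization) method. First I would set $\boldsymbol{\xi}(t)=\textbf{u}(t)-\varepsilon^*$ and substitute into the compact system \eqref{ca1}. Because the CF derivative of a constant vanishes (its integrand carries the factor $f^\prime$), we have $D^{\alpha}_{a^+}\boldsymbol{\xi}(t)=D^{\alpha}_{a^+}\textbf{u}(t)$, and a first-order Taylor expansion of $\textbf{F}$ about the equilibrium, using $\textbf{F}(\varepsilon^*)=\textbf{0}$, gives
\[
D^{\alpha}_{a^+}\boldsymbol{\xi}(t)=J(\varepsilon^*)\,\boldsymbol{\xi}(t)+\textbf{R}(\boldsymbol{\xi}(t)),
\]
where $J(\varepsilon^*)$ is the Jacobian of $\textbf{F}$ at $\varepsilon^*$ and the remainder satisfies $\|\textbf{R}(\boldsymbol{\xi})\|/\|\boldsymbol{\xi}\|\to0$ as $\|\boldsymbol{\xi}\|\to0$.

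Next I would compute $J(\varepsilon^*)$ explicitly from the right-hand sides of \eqref{e1}, so that the four eigenvalue conditions in the statement are exactly the hypotheses of Theorem \ref{thr1} applied with $A=J(\varepsilon^*)$. Dropping the remainder leaves the linearized equation $D^{\alpha}_{a^+}\boldsymbol{\xi}(t)=J(\varepsilon^*)\boldsymbol{\xi}(t)$, which has the form \eqref{e11}. Theorem \ref{thr1} then applies directly: any one of the four conditions on $\lambda(J(\varepsilon^*))$ forces $\lim_{t\to\infty}\|\boldsymbol{\xi}(t)\|=0$ for the linearized flow, i.e.\ asymptotic stability of the linearization.

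The final step is to transfer this back to local asymptotic stability of $\varepsilon^*$ for the full nonlinear system. Since the remainder $\textbf{R}$ is superlinearly small near the equilibrium, trajectories that start sufficiently close to $\varepsilon^*$ are governed by the linear part, so the decay guaranteed above persists for \eqref{e1} in a neighbourhood of $\varepsilon^*$; this is the content of the fractional linearization principle.

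I expect the main obstacle to lie precisely in this last transfer. For integer-order systems it is the classical Lyapunov/Hartman--Grobman theorem, but the CF operator lacks a convolution-semigroup structure and uses a nonsingular exponential kernel, so the standard argument does not carry over verbatim. Making the step rigorous would require an estimate showing that the superlinear remainder cannot overturn the decay supplied by Theorem \ref{thr1}, for instance via a Gronwall-type inequality or a contraction/energy argument built on the CF integral representation; I anticipate the authors invoke this principle by reference rather than proving it from scratch.
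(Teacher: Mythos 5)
Your proposal is correct and follows essentially the same route as the paper: the paper's entire proof is the one-line remark that the result is ``straightforward with Theorem \ref{thr1} and \cite{Matignon96stabilityresults},'' i.e.\ linearize at $\varepsilon^*$ and apply the linear CF stability criterion, with the linearization principle invoked by citation exactly as you anticipated. Your write-up is in fact more careful than the paper's, since you spell out the Taylor-remainder step and honestly flag that the transfer from the linearized system to local nonlinear stability is the part neither you nor the authors prove from scratch.
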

\begin{proof}
The proof is straightforward with Theorem \ref{thr1} and \cite{Matignon96stabilityresults}.
\end{proof}
To study the local stability of the equilibrium points such as $(x^{\ast}, y^{\ast}, z^{\ast})$ for system \eqref{e1} we provide the Jacobian matrix $J(x^{\ast}, y^{\ast}, z^{\ast})$ as follows
\[J(x^{\ast}, y^{\ast}, z^{\ast})=\begin{bmatrix}
a_1-2a_2x^{\ast}-y^{\ast}-z^{\ast}& -x^{\ast}& -x^{\ast}\\
a_4 y^{\ast}& 1-a_3+a_4x^{\ast}& 0\\
a_6z^{\ast}& a_7z^{\ast}& a_6x^{\ast}-a_5+a_7y^{\ast}+1
\end{bmatrix}.\]
\subsubsection{The first equilibrium} For $\varepsilon_0$, the Jacobian can be expressed as
\[J(\varepsilon_0)=\begin{bmatrix}
a_1& 0&0\\
0& 1-a_3& 0\\
0&0&1-a_5
\end{bmatrix},\]
where eigenvalues are $\lambda_1=a_1$, $\lambda_2=1-a_3$, $\lambda_3=1-a_5$. 
Since $a_1> 0$, $1-a_3< 0$, $1-a_4< 0$, then $\varepsilon_0$ is stable if $a_1>\dfrac{1}{1-\alpha}$.

\subsubsection{The second equilibrium} For $\varepsilon_1$, the Jacobian matrix is
\[J(\varepsilon_1)=\begin{bmatrix}
-a_1&-\dfrac{a_1}{a_2}&-\dfrac{a_1}{a_2}\\
0& 1-a_3+\dfrac{a_1a_4}{a_2}&0\\
0&0& 1-a_5+\dfrac{a_1a_6}{a_2}
\end{bmatrix},\]
where $\lambda_1=-a_1< 0$, $\lambda_2=1-a_3+\dfrac{a_1a_4}{a_2}$, and $\lambda_3=1-a_5+\dfrac{a_1a_6}{a_2}$. Thus, $\varepsilon_1$ is asymptotically stable when 
\begin{align*}
a_1a_4<a_2a_3-a_2,\\
a_1a_6<a_2a_5-a_2,
\end{align*}
or
\begin{align*}
a_2(1-a_3)(1-\alpha)>a_2-a_1a_4(1-\alpha),\\
a_2(1-a_5)(1-\alpha)>a_2-a_1a_6(1-\alpha).
\end{align*}
\subsubsection{The third equilibrium} For $\varepsilon_2$ the Jacobian matrix is 
\[J(\varepsilon_2)=\begin{bmatrix}
\dfrac{a_2}{a_6}(1-a_5) & \dfrac{1-a_5}{a_6}& \dfrac{1-a_5}{a_6}\\
0& 1-a_3-\dfrac{a_4}{a_6}(1-a_5)& 0\\
a_1a_6+a_2(1-a_5)& \dfrac{a_7}{a_6}\left( a_1a_6+a_2(1-a_5)\right) &0
\end{bmatrix},\]
we use the below notation
\[J(\varepsilon_2)=\begin{bmatrix}
A& B&C\\
0& D&0\\
E& F&0
\end{bmatrix}.\]
The characteristic equation is as follows
\[(\lambda-D)(\lambda^2-A\lambda -CE)=0,\]
by the condition $\dfrac{a_5-1}{a_6}<\dfrac{a_1}{a_2}<\dfrac{a_3-1}{a_4}$ we get 
\[A< 0, C< 0, E> 0, D< 0,\]
therefore
\[\lambda_1=D< 0, \lambda_2+\lambda_3=A< 0, \lambda_2\lambda_3=-CE> 0.\]
The eigenvalues are \[ \lambda_1=\left[ 1-a_3-\dfrac{a_4}{a_6}(1-a_5)\right], \] and \[ \lambda_{2,3}=\frac{\bigg[ a_2(1-a_5)\pm \sqrt{a_2^2 (1-a_5)^2+4a_6(1-a_5)(a_1a_6+a_2(1-a_5))}\bigg]}{2a_6}. \]
In this case, we can conclude that $\varepsilon_2$ is locally asymptoialy stable. However, when the condition $\dfrac{a_5-1}{a_6}<\dfrac{a_1}{a_2}<\dfrac{a_3-1}{a_4}$ was not available, $\varepsilon_2$ could be locally asymtotically stable when
$\lambda_1, \lambda_2, \lambda_3>\dfrac{1}{1-\alpha}$, which leads 

\[\left[ 1-a_3-\dfrac{a_4}{a_6}(1-a_5)\right] (1-\alpha)> 1,\]
and
\[\bigg[ a_2(1-a_4)\pm \sqrt{a_2^2 (1-a_4)^2+4a_6(1-a_5)(a_1a_6+a_2(1-a_5))}\bigg] (1-\alpha)> 2a_6.\]

\subsubsection{The fourth equilibrium} Jacobian of $\varepsilon_ 3$ is
\[J(\varepsilon_3)=\begin{bmatrix}
\dfrac{a_2}{a_4}(1-a_3)& \dfrac{1-a_3}{a_4}& \dfrac{1-a_3}{a_4}\\
a_1a_4+a_2(1-a_3)& 0&0\\
0&0& w
\end{bmatrix},\]
where $w=1-a_5-\dfrac{a_6}{a_4}(1-a_3)+\dfrac{a_7}{a_4}[a_1a_4+a_2(1-a_3)]$. Same as $\varepsilon_2$, we can provid the stability condition as
\[\dfrac{a_3-1}{a_4}<\dfrac{a_1}{a_2}<\dfrac{a_5-1}{a_6},\]
where $\lambda_1=1-a_4-\dfrac{a_6}{a_4}(1-a_3)+\dfrac{a_7}{a_4}[a_1a_4+a_2(1-a_3)]$ and 
\[\lambda_{2,3}=\dfrac{
\bigg[ a_2(1-a_3)\pm \sqrt{a_2^2(1-a_3)^2+4a_4(1-a_3)[a_1a_4+a_2(1-a_3)]}\bigg]}{2a_4}.\]
When the condition $\dfrac{a_3-1}{a_4}<\dfrac{a_1}{a_2}<\dfrac{a_5-1}{a_6}$
is not available, $\varepsilon_3$ is locally asymptotically stable when $\lambda_1(1-\alpha)> 1$, $\lambda_2(1-\alpha)> 1$, $\lambda_3(1-\alpha)> 1$.

\subsubsection{The fifth equilibrium} For $\varepsilon_4$, Jacobian matrix is as follows 
\begin{align*}
J(\varepsilon_4)&=\begin{bmatrix}
A& B&B\\
C&0&0\\
D&E&0
\end{bmatrix},
\end{align*}
where
\begin{align*}
&A=\dfrac{a_2}{a_4}(1-a_3), \\
&B=\dfrac{1-a_3}{a_4}, \\
&C=-\dfrac{a_4-a_6+a_3a_6-a_4a_5}{a_7},\\
&D=\dfrac{a_6(a_4-a_6+a_2a_7+a_3a_6-a_4a_5+a_1a_4a_7-a_2a_3a_7)}{a_4a_7},\\
&E=\dfrac{a_4-a_6+a_2a_7+a_3a_6-a_4a_5+a_1a_4a_7-a_2a_3a_7}{a_4}.
\end{align*}
To compute eigenvalues of the above matrix, we consider the characteristic polynomial,  $L(\lambda)=\lambda^3+a\lambda^2+b\lambda +c, $ where $ a=-A, b=-B(C+D), c=-BCE. $ It is obvious $ a, c>0. $ If $ a_1a_2>a_3 $ then Routh-Hurwitz criterion shows that the all roots of        are negative. The equation $ a_1a_2-a_3=B[A(C+D)+CE] $ is positive if 
\[a_6>\dfrac{a_2a_4(a_3-1)[w+a_2(a_3-1)]}{w(a_2+a_4)+a_2a_4(a_3-1)},
\]
where
\[
w=a_4(1+a_1a_7-a_5)+(a_6-a_2a_7)(a_3-1).
\]
To investigate the stability of the system in the sense of CF operator we consider the following parameter for $ L(\lambda) $ as
\begin{align*}
p&=b-\dfrac{a}{3},\\
q&=\dfrac{2a^3}{27}-\dfrac{ab}{3}+c,\\
\Delta &=\dfrac{q^2}{4}+\dfrac{p^3}{27}.
\end{align*}
If $\Delta> 0$, then we have only one real solution
\begin{align}\label{er7}
\lambda=\left(-\dfrac{q}{2}+\sqrt{\Delta}\right)^{\frac{1}{3}}+\left( -\dfrac{q}{2}-\sqrt{\Delta}\right)^{\frac{1}{3}}-\dfrac{q}{3}.
\end{align}
If $\Delta=0$, there are repeated roots
\begin{align}
\lambda_1=-2(\dfrac{q}{2})^{\frac{1}{3}}-\dfrac{q}{3}\qquad \lambda_2=\lambda_3=(\dfrac{q}{2})^{\frac{1}{3}}-\dfrac{q}{3}.
\end{align}
If $\Delta> 0$ then roots are same as below
\begin{align}
&\lambda_1=\dfrac{2\sqrt{p}}{\sqrt{3}}sin(\dfrac{1}{3}arc sin(\dfrac{3\sqrt{3}q}{2(\sqrt{-p})^3}))-\dfrac{a}{3},\\
&\lambda_2=-\dfrac{2\sqrt{p}}{\sqrt{3}}sin(\dfrac{1}{3}arc sin(\dfrac{3\sqrt{3}q}{2(\sqrt{-p})^3}+\dfrac{\pi}{3}))-\dfrac{a}{3},\\ \label{er11}
&\lambda_3=\dfrac{2\sqrt{-p}}{\sqrt{3}}cos(\dfrac{1}{3}arc sin(\dfrac{3\sqrt{3}q}{2(\sqrt{-p})^3}+\dfrac{\pi}{6}))-\dfrac{a}{3}.
\end{align}
Consequently, $ \varepsilon_4 $ is locally asymptotically stable if in any case all of the eigenvalues satisfying these conditions
\[\lambda_1, \lambda_2, \lambda_3>\dfrac{1}{(1-\alpha)}.\]
We end this section by  summarizing the stability conditions of all the equilibrium points for Caputo and CF operators in Table \ref{t1}.

\begin{table}[t] 
{\scriptsize{
\begin{center}
\caption{Stability conditions for Caputo and CF operators.}\label{t1}
\begin{tabular}{ |m{1.45cm}| m{4.4cm} |m{7.3cm}|}
\hline \rowcolor{gray!15}
\textbf{Equilibrium point} & \textbf{Caputo derivative}  & \textbf{CF operator}  \\ \hline
$ \varepsilon_0 $&Always saddle &$ a_1>\dfrac{1}{1-\alpha} $\\\hline

$ \varepsilon_1 $&$  a_1a_2<a_2a_3-a_2$ \newline~~ and ~~\newline $  a_1a_2<a_2a_3-a_2$&$  a_1a_2<a_2a_3-a_2$  and $  a_1a_2<a_2a_3-a_2$ \newline or \newline $ \dfrac{a_1a_4-a_2a_3}{a_2}>\dfrac{\alpha}{1-\alpha} $ ~~and~~ $ \dfrac{a_1a6-a_2a_5}{a_2}>\dfrac{\alpha}{1-\alpha} $\\ \hline

$ \varepsilon_2 $&$ \dfrac{a_5-1}{a_6}<\dfrac{a_1}{a_2}<\dfrac{a_3-1}{a_4} $&$ \dfrac{a_5-1}{a_6}<\dfrac{a_1}{a_2}<\dfrac{a_3-1}{a_4} $\newline or \newline $ \left[ 1-a_3-\dfrac{a_4}{a_6}(1-a_5)\right]>\dfrac{1}{1-\alpha} $~~and \newline $ \frac{\bigg[ a_2(1-a_5)\pm \sqrt{a_2^2 (1-a_5)^2+4a_6(1-a_5)(a_1a_6+a_2(1-a_5))}\bigg]}{2a_6}>\frac{1}{1-\alpha} $\\\hline

$ \varepsilon_3 $&$ \dfrac{a_3-1}{a_4}<\dfrac{a_1}{a_2}<\dfrac{a_5-1}{a_6} $&$ \dfrac{a_3-1}{a_4}<\dfrac{a_1}{a_2}<\dfrac{a_5-1}{a_6} $\newline or \newline $ 1-a_4-\dfrac{a_6}{a_4}(1-a_3)+\dfrac{a_7}{a_4}[a_1a_4+a_2(1-a_3)]>\dfrac{1}{1-\alpha} $ ~~  and \newline $ \frac{
\bigg[ a_2(1-a_3)\pm \sqrt{a_2^2(1-a_3)^2+4a_4(1-a_3)[a_1a_4+a_2(1-a_3)]}\bigg]}{2a_4}>\frac{1}{1-\alpha} $ \\\hline

$ \varepsilon_4 $&\scriptsize{$ a_6>\dfrac{a_2a_4(a_3-1)[w+a_2(a_3-1)]}{w(a_2+a_4)+a_2a_4(a_3-1)} $}\newline $ \big(w=a_4(1+a_1a_7-a_5)+(a_6-a_1a_7)(a_3-1)\big) $&$ a_6>\dfrac{a_2a_4(a_3-1)[w+a_2(a_3-1)]}{w(a_2+a_4)+a_2a_4(a_3-1)} $ \newline or \newline $ \lambda_1, \lambda_2, \lambda_3>\dfrac{1}{(1-\alpha)} $\newline (see equations \eqref{er7}-\eqref{er11})\\\hline
\end{tabular}
\end{center}
}}
\end{table}

\section{Numerical algorithm}\label{algo}
 
The Predictor-Corrector methods are well-known numerical approach so that their extensions can provide accurate numerical solutions of fractional differential equations (FDEs). For instance, in \cite{dadras}, a numerical method based on Adams-Bashforth methods is proposed for solving FDEs with Caputo derivative. In \cite{adam2}, authors have investigated a fractional Adams-Bashforth method for solving FDEs with the CF operator, although their arguments are flawed.  For this aim, in this part of the paper, we correct this method to solve the Lotka-Volterra system of Caputo and the CF operator and we compare the solutions using the both operators. Consider the following differential equation:

\begin{align}
&{}^{CF}D_0^{\alpha}f(x)=g(t, f(x)),~~x\in[0, t'],\\
&f^{(i)}(0)=f_0^{i}, ~~i=0, 1, 2, \ldots, n-1, ~n=\lceil \alpha\rceil,
\end{align}
which is equivalent to the following equation
\begin{equation}
f(x)=T_{n-1}(x)+\dfrac{1-\alpha}{M(\alpha)(n-2)!}\int_0^x(x-t)^{n-2}g(t, f(t))dt+\dfrac{\alpha}{M(\alpha)(n-1)!}\int_0^x(x-t)^{n-1}g(t, f(t))dt
\end{equation}
where $ T_{n-1}(x) $ is the Taylor expansion of $ f(x) $ centered at $ x_0=0 $ and $ T_{n-1}(x)=\sum_{i=0}^{n-1}\frac{x^i}{i!}f_0^{(i)} $.

The corrector formula $f_{k+1}$ can be written as follows
\begin{align*}
f_{k+1}&= T_{n-1}(x)+\dfrac{\alpha}{M(\alpha)(n-1)!}\bigg[ \sum_{i=0}^ k b_{i,k+1}g(x_i, f_i)+ b_{k+1, k+1}g(x_{k+1}, f_{k+1}^p)\bigg]\\
b_{i,k+1}&=\dfrac{h^n}{n(n+1)}\begin{cases}
k^{n+1}-(k+1)^n (k-n),& i=0\\
(k-i-2)^{n+1}-2(k-i+1)^{n+1}+(k-i)^{n+1},& 1\leqslant i\leqslant k\\
1,& i=k+1
\end{cases}
\end{align*}
and by the fractional Adams-Bashforth-multon method \cite{dadras}, $f_{k+1}^p$ is determined by
\[f_{k+1}^p =T_{n-1}(x)+\dfrac{\alpha}{M(\alpha)(n-1)!}\sum_{i=0}^k d_{i, k+1}g(x_i, f_i)\]
where
\[d_{i,k+1}=\dfrac{h^n}{n}\bigg[ (k-i+1)^n -(k-i)^n\bigg]\]

Now, consider the following fractional-order system involving CF operator
\begin{equation}\label{eq4}
\begin{cases}
{}^{CF}D^{\alpha}_{a^+}x(t)=f_1(x,y,z),&\\
{}^{CF}D^{\alpha}_{a^+}y(t)=f_2(x,y,z),&\\
{}^{CF}D^{\alpha}_{a^+}z(t)=f_3(x,y,z).&
\end{cases}
\end{equation}
We consider $0\leqslant\alpha\leqslant 1$ for simplicity and assume that $(x_0, y_0, z_0)$ is the initial point. Applying the above scheme, system \eqref{eq4} can be discretized as follows
\begin{align*}
x_{k+1}&=x_0+\dfrac{\alpha}{M(\alpha)(n-1)!}\bigg[ \sum_{i=0}^k b_{1\;i,k+1} f_1(x_i,y_i, z_i)+b_{1\;k+1, k+1}f_1(x_{k+1}^p , y_{k+1}^p , z_{k+1}^p)\bigg],\\
y_{k+1}&=y_0+\dfrac{\alpha}{M(\alpha)(n-1)!}\bigg[ \sum_{i=0}^k b_{2\; i,k+1}f_2(x_i, y_i, z_i)+b_{2\;k+1, k+1}f_2(x_{k+1}^p, y_{k+1}^p , z_{k+1}^p)\bigg],\\
z_{k+1}&=z_0+\dfrac{\alpha}{M(\alpha)(n-1)!}\bigg[ \sum_{i=0}^k b_{3\; i,k+1}f_3(x_i, y_i, z_i)+b_{3\; k+1, k+1}f_3(x_{k+1}^p, y_{k+1}^p , z_{k+1}^p)\bigg],
\end{align*}
where
\begin{align}
x_{k+1}^p&=x_0+\dfrac{\alpha}{M(\alpha)(n-1)!}\bigg[ \sum_{i=0}^k d_{1\; i,k+1}f_1(x_i, y_i, z_i)\bigg],\\
y_{k+1}^p&=y_0+\dfrac{\alpha}{M(\alpha)(n-1)!}\bigg[ \sum_{i=0}^k d_{2\; i,k+1}f_2(x_i, y_i, z_i)\bigg],\\
z_{k+1}^p&=z_0+\dfrac{\alpha}{M(\alpha)(n-1)!}\bigg[ \sum_{i=0}^k d_{3\; i,k+1}f_3(x_i, y_i, z_i)\bigg],
\end{align}
and
\begin{align*}
b_{j\; i,k+1}&=\dfrac{h^n}{n(n+1)}\begin{cases}
k^{n+1}-(k+1)^n (k-n),& i=0\\
(k-i+2)^{n+1}-2(k-i+1)^{n+1}+(k-i)^{n+1},& 1\leqslant i\leqslant 1\\
1,& i=k+1
\end{cases}\\
d_{j\; i,k+1}&=\dfrac{h^n}{n}\left[ (k-i+1)^n-(k-i)^n\right].
\end{align*}
In the following, we apply the proposed numerical technique for simulation the solutions of system \ref{e1}. It is worth mentioning that to obtain the numerical results in the sense of Caputo derivative we use the equivalent Adams-Bashforth-multon method described in \cite{dadras}.
\section{Numerical implementation}\label{imp}
In this part, we discuss the numerical results of the fractional-order Lotka-Volterra model \eqref{e1} with Caputo and CF operators, by using the numerical method described in Sec. \ref{algo}. It is helpful to classify the numerical results according to the positions of the eigenvalues and discuss the behavior of the system in the sense of Caputo and CF operator. To illustrate such a classification, we provide stability and instability region for both operators in Fig. \ref{fig:compare} and set four eigenvalues on the plane for different cases.  
The unstable domain of the system with Caputo derivative is an unbound region limited by two lines with angles $-\alpha\frac{\pi}{2}$ and $\alpha\frac{\pi}{2}$. On the other hand, the unstable domain of the system with CF operator is a bounded closed circle centered at $(0,\frac{1}{2(1-\alpha)})$ with the radius $\frac{1}{2(1-\alpha)}$. For both cases, it is clear that the stability of the system has an inverse relation with the order derivative; in fact, the smaller $\alpha$ is, the more there is space for stablitity and vice versa. As it is shown in Fig \ref{fig:compare}, the eigenvalues can be located in four distinct classes: $\lambda_A$ is in an area where both systems are stable; the class of $\lambda_B$ is where the system with Caputo derivatives is stable, but the system with CF operator is not stable; $\lambda_C$ denotes a class of eigenvalues staying at where both systems are unstable; and finally, $\lambda_D$ is where the system in the sense of CF operator is stable but the system with Caputo derivatives is not stable. 
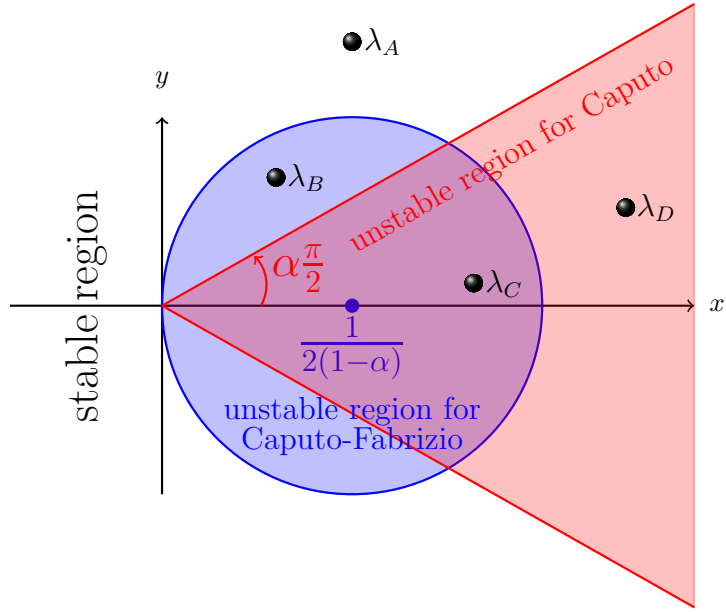
\begin{figure}
\centering
\begin{tikzpicture}[thick]
\draw [->](-2,0) -- (7,0);
\draw [->](0,-2.5) -- (0,2.5);
\node at (0,3) {$y$};
\node at (7.3,0) {$x$};

\filldraw [blue,nearly transparent](2.5,0) circle (2.5cm);
\draw [blue,thick](2.5,0) circle (2.5cm);

\filldraw[blue] (2.5,0) circle (.08cm);
\node[text=blue,font=\LARGE] at (2.5,-0.55) {$\frac{1}{2(1-\alpha)}$};

\filldraw[red,nearly transparent] (0,0) -- (7,-4) -- (7,4) -- (0,0);
\draw[red,thick] (0,0) -- (7,-4);
\draw[red,thick] (0,0) -- (7,4);
\draw [red,->] (1.3,0) arc (-30:45:15.5pt);
\node [text=red, font=\LARGE] at (1.8,.5) {$\alpha\frac{\pi}{2}$};

\node [font=\large] at (2.9,3.5) {$\lambda_{A}$};
\shade[ball color=black] (2.5,3.5) circle (.8ex);

\node [font=\large] at (1.9,1.7) {$\lambda_{B}$};
\shade[ball color=black] (1.5,1.7) circle (.8ex);

\node [font=\large] at (4.5,0.3) {$\lambda_{C}$};
\shade[ball color=black] (4.1,0.3) circle (.8ex);

\node [font=\large] at (6.5,1.3) {$\lambda_{D}$};
\shade[ball color=black] (6.1,1.3) circle (.8ex);

\node[text=blue,font=\large]  at (2.5,-1.4) {unstable region for};
\node[text=blue,font=\large]  at (2.5,-1.8) {Caputo-Fabrizio};

\node[text=red,rotate=30,font=\large]  at (4.6,2) {unstable region for Caputo};

\node[rotate=90,font=\LARGE]  at (-1,0) {stable region};
\end{tikzpicture}
\caption{Comparison stability and unstability domain of Caputo and CF operators}
\label{fig:compare}
\end{figure}

We collect a summary of the three examples in Table \ref{tab:table} to easily compare the behavior of the model concerning the Caputo and CF operators. 

\begin{table}
     \caption{The summary of examples; \textbf{C}, \textbf{CF}, and $U(0)$ denote Caputo, Caputo-Fabrizio, and initial values, respectively, and the notation {\large\cmark} indicates the system is asymptotically stable, while {\large\xmark} implys unstability.}
     \label{tab:table}
     \centering
     \small
\resizebox{14cm}{!}{
\begin{tabular}{|m{1.5cm}|m{1.2cm}|m{2.26cm}|m{4cm}|c|c|c|c|}
\hline
 \multicolumn{4}{|c|}{\cellcolor{gray!30}\large \textbf{Example 1}}&{\cellcolor{gray!15}\textbf{C}}&{\cellcolor{gray!15}\textbf{CF}}&{\cellcolor{gray!15}\textbf{C}}&{\cellcolor{gray!15}\textbf{CF}}\\ \cline{1-8}

{\cellcolor{gray!15}Coefficient}& {\cellcolor{gray!15}$U(0)$}& {\cellcolor{gray!15}Equilibrium}& {\cellcolor{gray!15}Eigenvalues} &\multicolumn{2}{c|}{$ \alpha=0.98 $}&\multicolumn{2}{c|}{$ \alpha\leq0.66 $}\\ \cline{1-8}

\multirow{5}{*}{\vtop{\hbox{\strut $a_1=3$}\hbox{\strut $a_2=0.5$}\hbox{\strut $a_3=4$}\hbox{\strut $a_4=3$}\hbox{\strut $a_5=4$}\hbox{\strut $a_6=9$}\hbox{\strut $a_7=4$}}}&\multirow{5}{*}{\vtop{\hbox{\strut $x_0=0.5$}\hbox{\strut $y_0=0.9$}\hbox{\strut $z_0=0.1$}}}&$ \varepsilon_0 (0,0,0)$&$ \lambda_0 (-3,-3,3) $& \large\xmark & \large\xmark & \large\xmark & \large \cmark \\ \cline{3-8}

& & $ \varepsilon_1 (6,0,0) $&$ \lambda_1 (-3,15,51) $&\large\xmark &\large\xmark &\large\xmark & \large \cmark \\ \cline{3-8}

& & $ \varepsilon_2 (0.33, 0, 2.83) $&$ \lambda_2 (-0.083-2.914i,  -0.083+2.914i, -2+0i) $& \large \cmark & \large \cmark & \large \cmark & \large \cmark \\ \cline{3-8}

& & $ \varepsilon_3 (1,2.5,0) $&$ \lambda_3 (-0.25-2.727i,
-0.25+2.727i, 16+0i) $&\large\xmark & \large\xmark &\large\xmark & \large \cmark \\ \cline{3-8}

& & {$ \varepsilon_4 (1,-1.5,4)$ Not Acceptable} &$ \lambda_4 (-1.239-5.904i, -1.239+5.904i, 1.978+0i) $&\large\xmark &\large\xmark &\large\xmark &\large\xmark \\ \cline{1-8}
\multicolumn{4}{|c|}{\cellcolor{gray!30}\large \textbf{Example 2}}&\multicolumn{2}{c|}{\cellcolor{gray!15}{\textbf{C}}}&\multicolumn{2}{c|}{\cellcolor{gray!15}{\textbf{CF}}}\\ \cline{1-8}
{\cellcolor{gray!15}{Coefficient}}& {\cellcolor{gray!15}{\cellcolor{gray!15}$U(0)$}}& {\cellcolor{gray!15}{Equilibrium}}& {\cellcolor{gray!15}{Eigenvalues}} &\multicolumn{4}{c|}{$ \alpha=0.6 $}\\ \cline{1-8}

\multirow{5}{*}{\vtop{\hbox{\strut $a_1=3$}\hbox{\strut $a_2=0.5$}\hbox{\strut $a_3=4$}\hbox{\strut $a_4=3$}\hbox{\strut $a_5=14$}\hbox{\strut $a_6=9$}\hbox{\strut $a_7=4$}}}&\multirow{5}{*}{\vtop{\hbox{\strut $x_0=2$}\hbox{\strut $y_0=2$}\hbox{\strut $z_0=3$}}}&$ \varepsilon_0 (0,0,0) $&$ \lambda_0 (-13,-3,3) $&\multicolumn{2}{c|}{\large\xmark}&\multicolumn{2}{c|}{\large \cmark }\\ \cline{3-8}

& & $ \varepsilon_1 (6,0,0) $&$ \lambda_1 (-3,15,41) $&\multicolumn{2}{c|}{\large\xmark}&\multicolumn{2}{c|}{\large \cmark}\\ \cline{3-8}

& & $ \varepsilon_2 (1.44,0,2.28) $&$ \lambda_2 (-0.361-5.429i,
-0.361+5.429i, 1.333+0i) $&\multicolumn{2}{c|}{\large\xmark}&\multicolumn{2}{c|}{\large\xmark}\\ \cline{3-8}

& & $ \varepsilon_3 (1,2.5,0) $&$ \lambda_3 (-0.25-2.727i, -0.25+2.727i, 6+0i) $&\multicolumn{2}{c|}{\large\xmark}&\multicolumn{2}{c|}{\large \cmark}\\ \cline{3-8}

& & $ \varepsilon_4 (1,1,1.5) $&$ \lambda_4 (0.276-4.123i,
0.276+4.123i, -1.053+0i) $&\multicolumn{2}{c|}{\large \cmark}&\multicolumn{2}{c|}{\large\xmark}\\ \cline{1-8}


\multicolumn{4}{|c|}{\cellcolor{gray!30}\large \textbf{Example 3}}&\multicolumn{2}{c|}{\cellcolor{gray!15}{\textbf{C}}}&\multicolumn{2}{c|}{\cellcolor{gray!15}{\textbf{CF}}}\\ \cline{1-8}
{\cellcolor{gray!15}{Coefficient}}& {\cellcolor{gray!15}{$U(0)$}}& {\cellcolor{gray!15}{Equilibrium}}& {\cellcolor{gray!15}{Eigenvalues}} &\multicolumn{4}{c|}{$ \alpha=0.4 $}\\ \cline{1-8}

\multirow{5}{*}{\vtop{\hbox{\strut $a_1=8$}\hbox{\strut $a_2=0.5$}\hbox{\strut $a_2=4$}\hbox{\strut $a_4=1$}\hbox{\strut $a_5=7$}\hbox{\strut $a_6=9$}\hbox{\strut $a_7=4$}}}&\multirow{5}{*}{\vtop{\hbox{\strut}\hbox{\strut $x_0=0.5$}\hbox{\strut $y_0=0.1$}\hbox{\strut $z_0=5$}}}&$ \varepsilon_0 (0,0,0) $&$ \lambda_0 (-6,-3,8) $&\multicolumn{2}{c|}{\large\xmark}&\multicolumn{2}{c|}{\large \cmark}\\
 \cline{3-8}

& & $ \varepsilon_1 (160,0,0) $&$ \lambda_1 (-8,157,1434) $&\multicolumn{2}{c|}{\large\xmark}&\multicolumn{2}{c|}{\large \cmark}\\ \cline{3-8}

& & $ \varepsilon_2 (0.666, 0, 7.966) $ & $ \lambda_2 (-0.016 -6.913i, -0.016 +6.913i, -2.333+0i) $&\multicolumn{2}{c|}{\large \cmark}&\multicolumn{2}{c|}{\large \cmark}\\ \cline{3-8}

& & $ \varepsilon_3 (3, 7.85, 0) $&$ \lambda_3 (-0.075 -4.852i, -0.075 +4.852i, 52.4 +0i) $&\multicolumn{2}{c|}{\large\xmark}&\multicolumn{2}{c|}{\large \cmark}\\ \cline{3-8}

& & {$ \varepsilon_4 (3, -5.25, 13.1) $ Not Acceptable} &$ \lambda_4 (-1.274 -18.50i, -1.274 +18.50i, 2.398 +0i) $&\multicolumn{2}{c|}{\large\xmark}&\multicolumn{2}{c|}{\large \cmark}\\ \cline{1-8}
\end{tabular}
}
\end{table}

\subsection{Example 1}
As one can see in Table \ref{tab:table}, the parameters of this example give five distinct equilibrium points (and corresponding eigenvalues), while equilibrium $\epsilon_4$ is not acceptable since it has a negative value. Thus, we should expect negative-value solutions of the system when we do not impose any constraints on the components. There is a recommended paper \cite{attraction} to avoid going toward such meaningless solutions and getting a feasible solution. 

Furthermore, this example shows the stability of the equilibrium points depends on the value of the fractional-order $\alpha$. As we expect from Fig. \ref{fig:compare}, the number of stable equilibrium points increases when we reduce the value of $\alpha$. In this case, when $\alpha$ is $0.98$ for both operators, the system is asymptotically stable only at $\epsilon_2$ (see Table \ref{fig:example1}). Indeed, Fig. \ref{fig:example1} (left) shows that the system gets steady at $\epsilon_2$, with different oscillations which are related to the definition of the operators. Nonetheless, the condition $\alpha \leq 0.66$ provides a larger area for the stability of the system so that three eigenvalues $\lambda_0$, $\lambda_1$, and $\lambda_3$ stay in the class of $\lambda_D$ (see Fig. \ref{fig:compare} and Table \ref{tab:table}). Hence, with appropriate initial values and differential orders, the system could converge to $\epsilon_3$ (see Fig. \ref{fig:example1}, right).

\begin{figure}[!tbp]%
\centering
\subfloat{{\includegraphics[scale=.45]{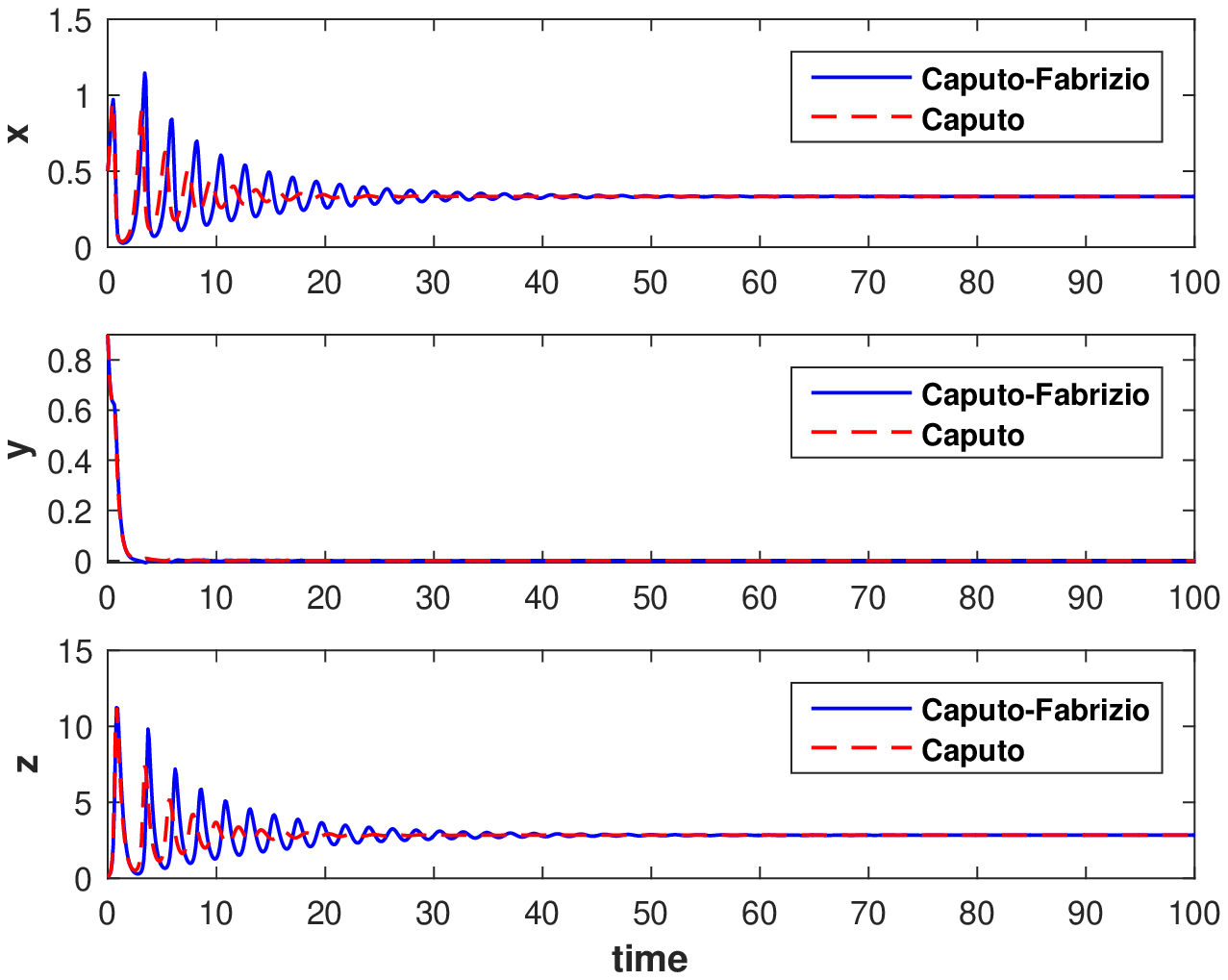} }}%
\subfloat{{\includegraphics[scale=.45]{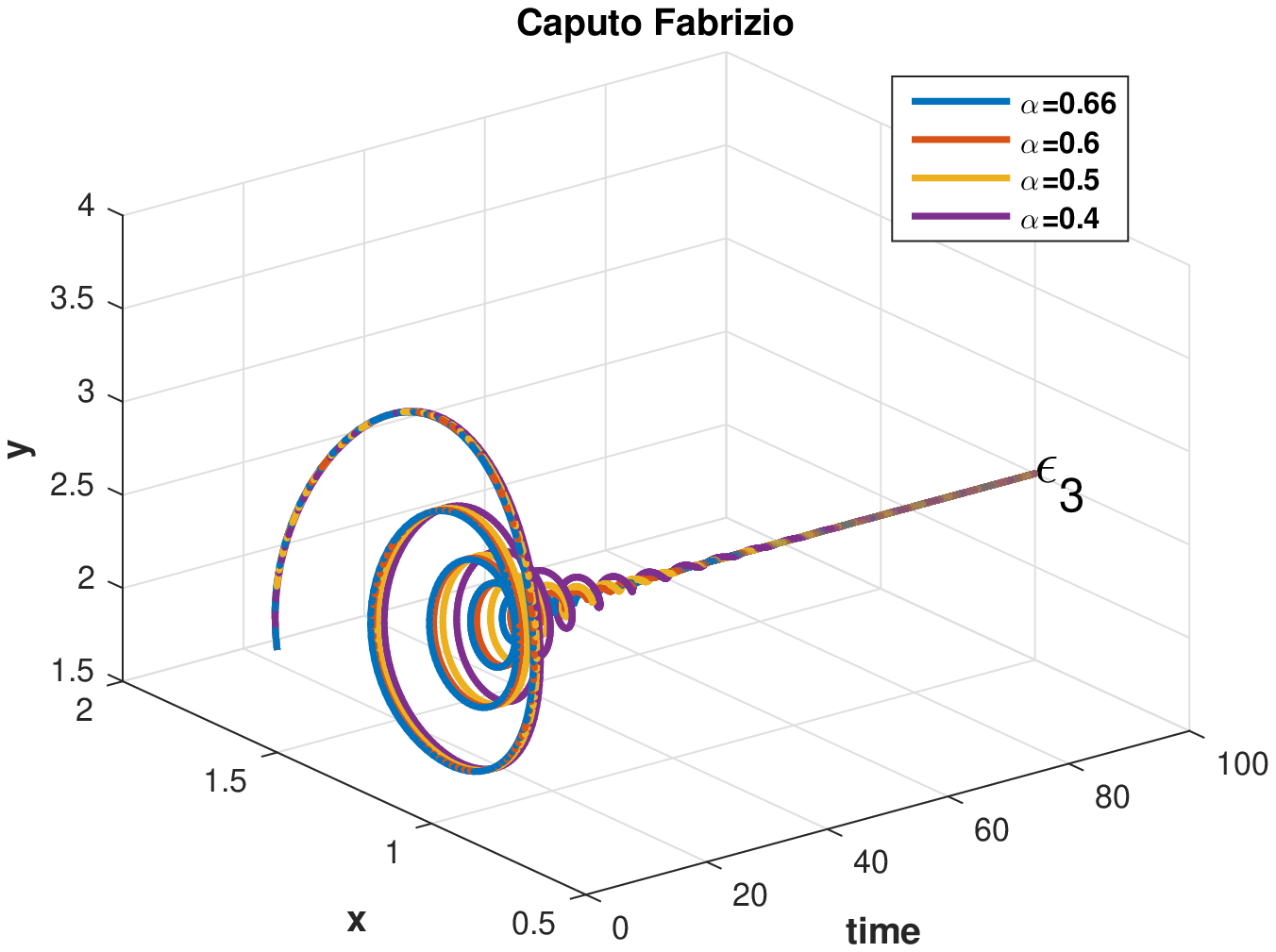} }}%
\caption{(left) Comparing the behavior of Caputo and CF operators for system \ref{e1} with the parameters of \hyperref[tab:table]{Example 1}, (right) converging to $\epsilon_3$ with CF operator for $\alpha\leq0.66$ and $(x_0,y_0,z_0)=(1.6,1.9,0)$.}%
\label{fig:example1}%
\end{figure}

\subsection{Example 2}
This example confirms the points mentioned in the previous one; by setting $\alpha=0.6$, the equilibrium $\epsilon_4$ is the only stable equilibrium point in the sense of Caputo derivative, while the equilibrium points $\epsilon_0$, $\epsilon_1$, and $\epsilon_3$ are stable concerning the CF operator. But, as shown in Fig. \ref{fig:compare}, it is interesting that we have here an eigenvalue, $\lambda_4$, in the class of $\lambda_B$ alongside the $\lambda_D$, where $\lambda_0$, $\lambda_1$, and $\lambda_3$ are. As a result, Fig. \ref{fig:example2} shows that the system can start from a point to converge asymptotically to the only equilibrium that is stable in the sense of Caputo, rather than the CF operator. Therefore, it could make a challenge for one who assumes a system having a more stability region may lead to more potential to achieve a steady-state.

\begin{figure}[!tbp]%
\centering
\subfloat{{\includegraphics[scale=.45]{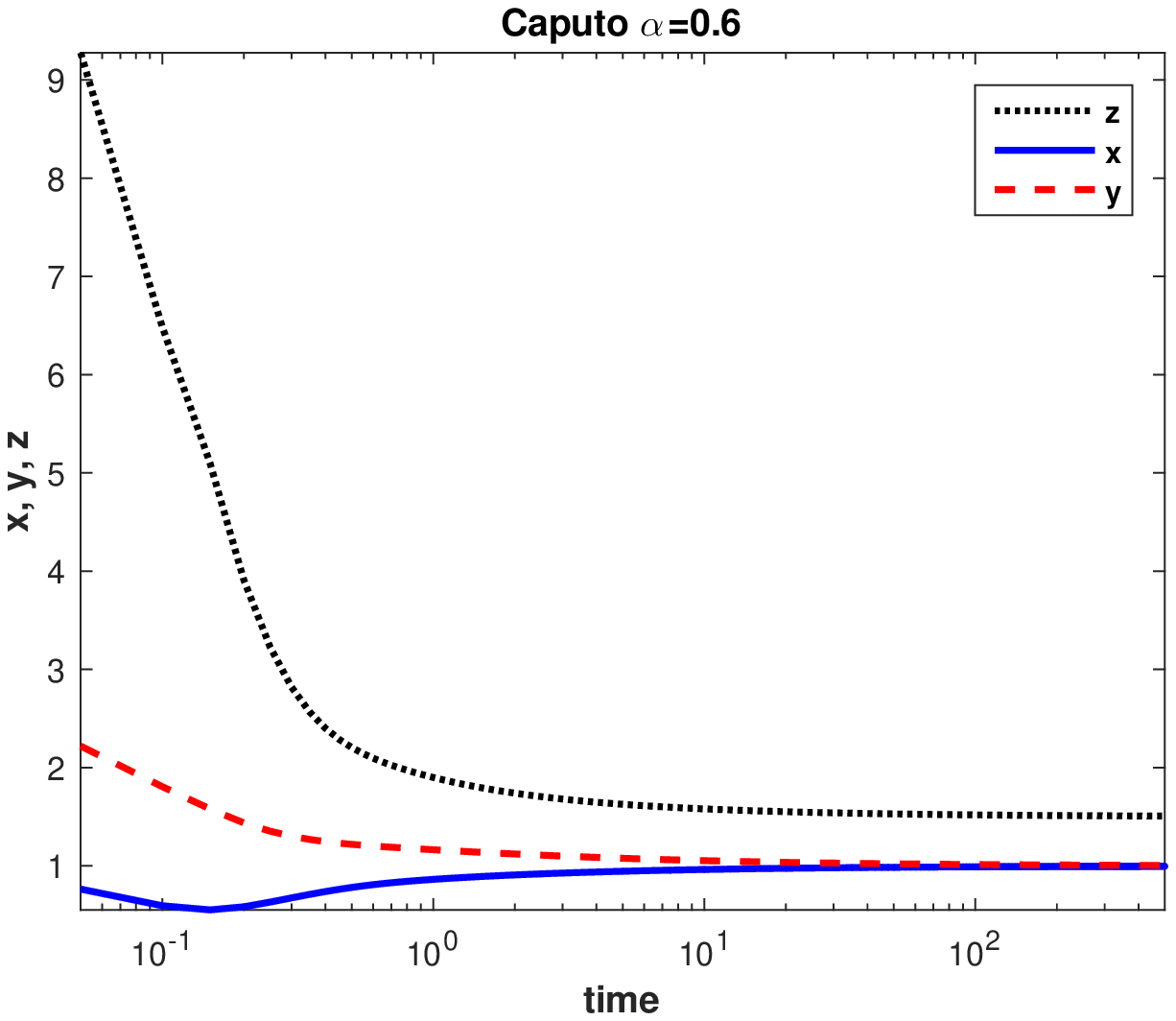} }}%
\subfloat{{\includegraphics[scale=.45]{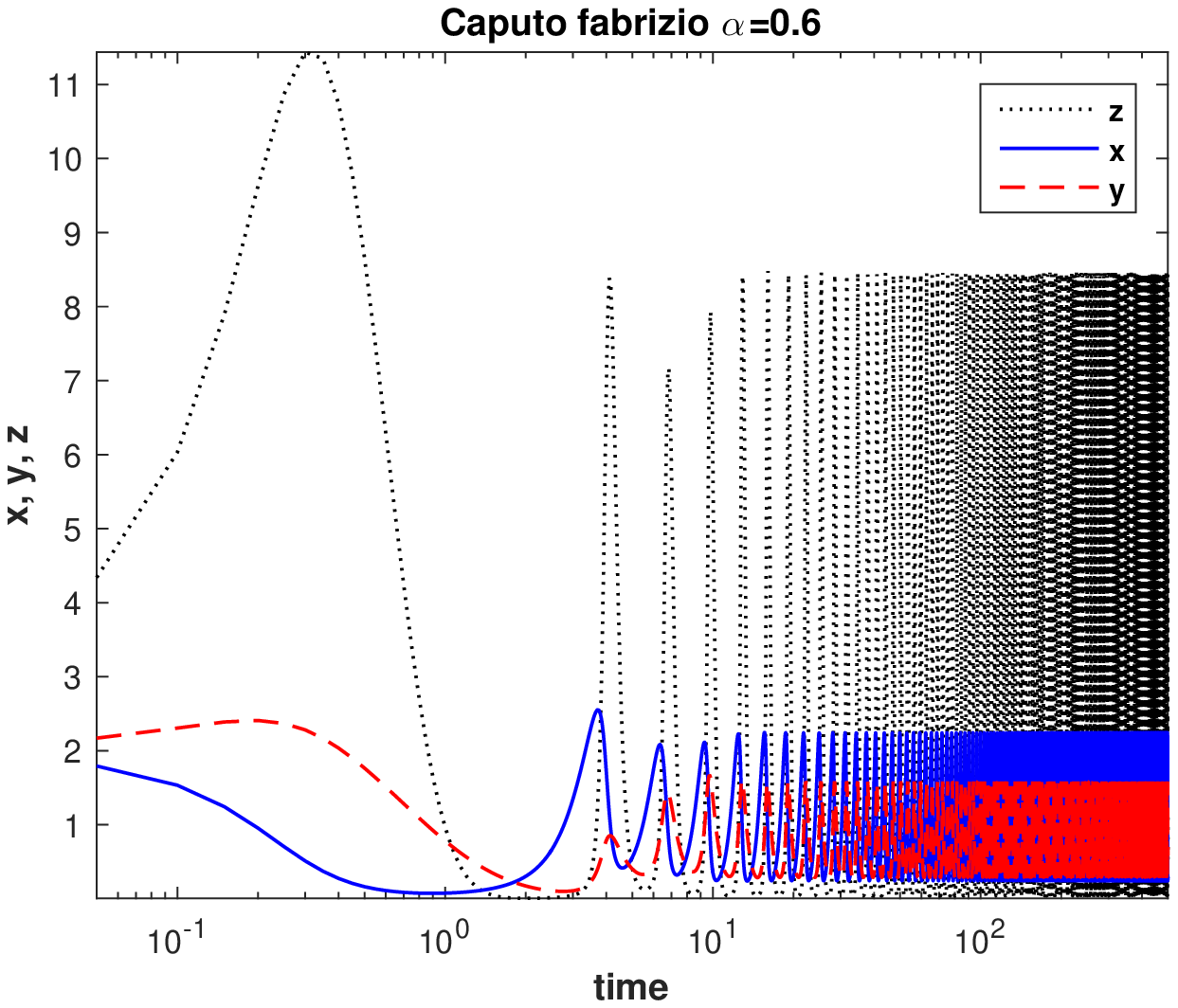} }}%
\caption{System \ref{e1} with the parameters of \hyperref[tab:table]{Example 2} and $(x_0,y_0,z_0)=(2,2,3)$ is asymptotically stable for Caputo (left) and unstable for CF (right).}
\label{fig:example2}%
\end{figure}

\begin{figure}[!tbp]%
\centering
\subfloat{{\includegraphics[scale=0.45]{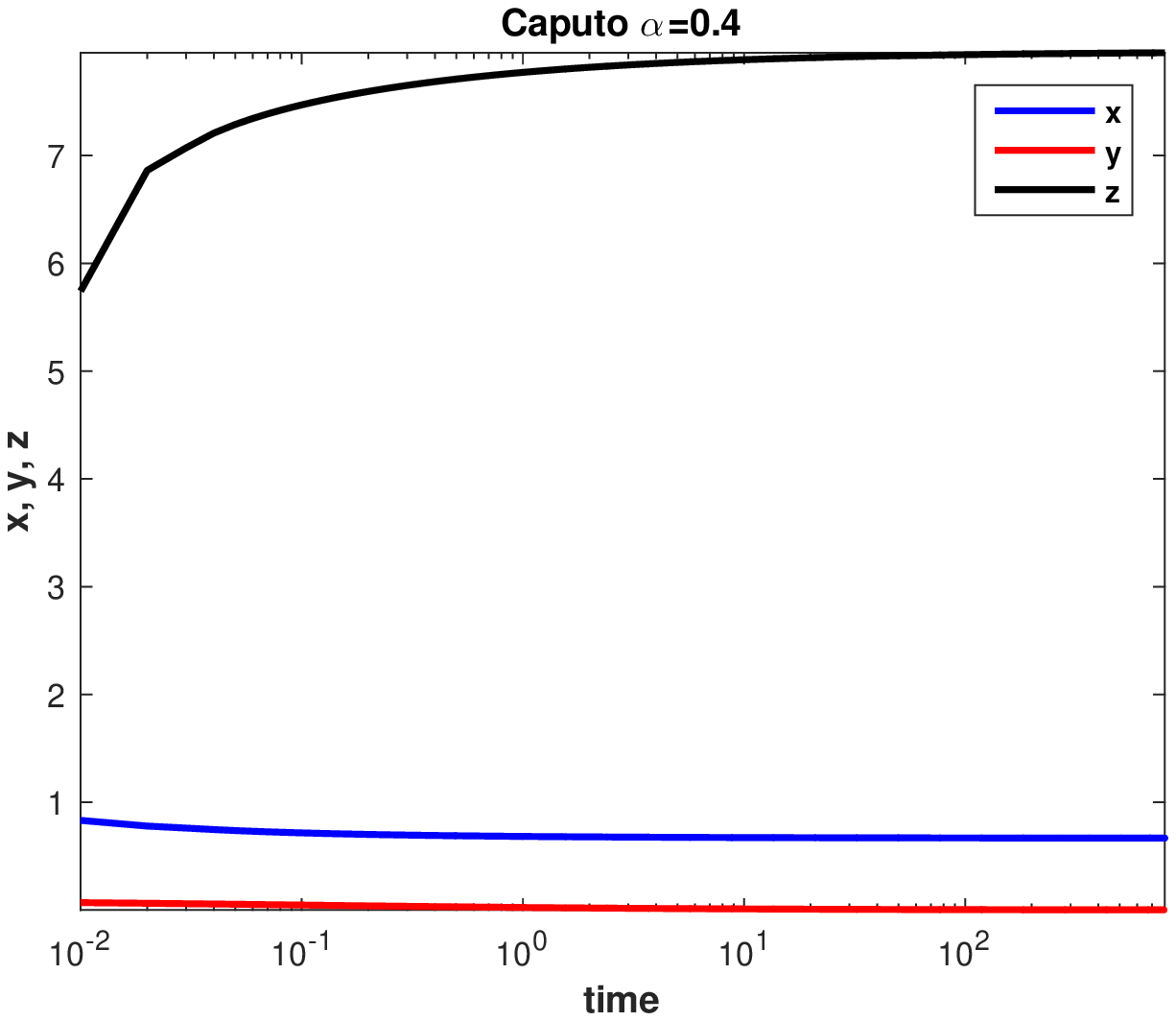} }}%
\subfloat{{\includegraphics[scale=0.45]{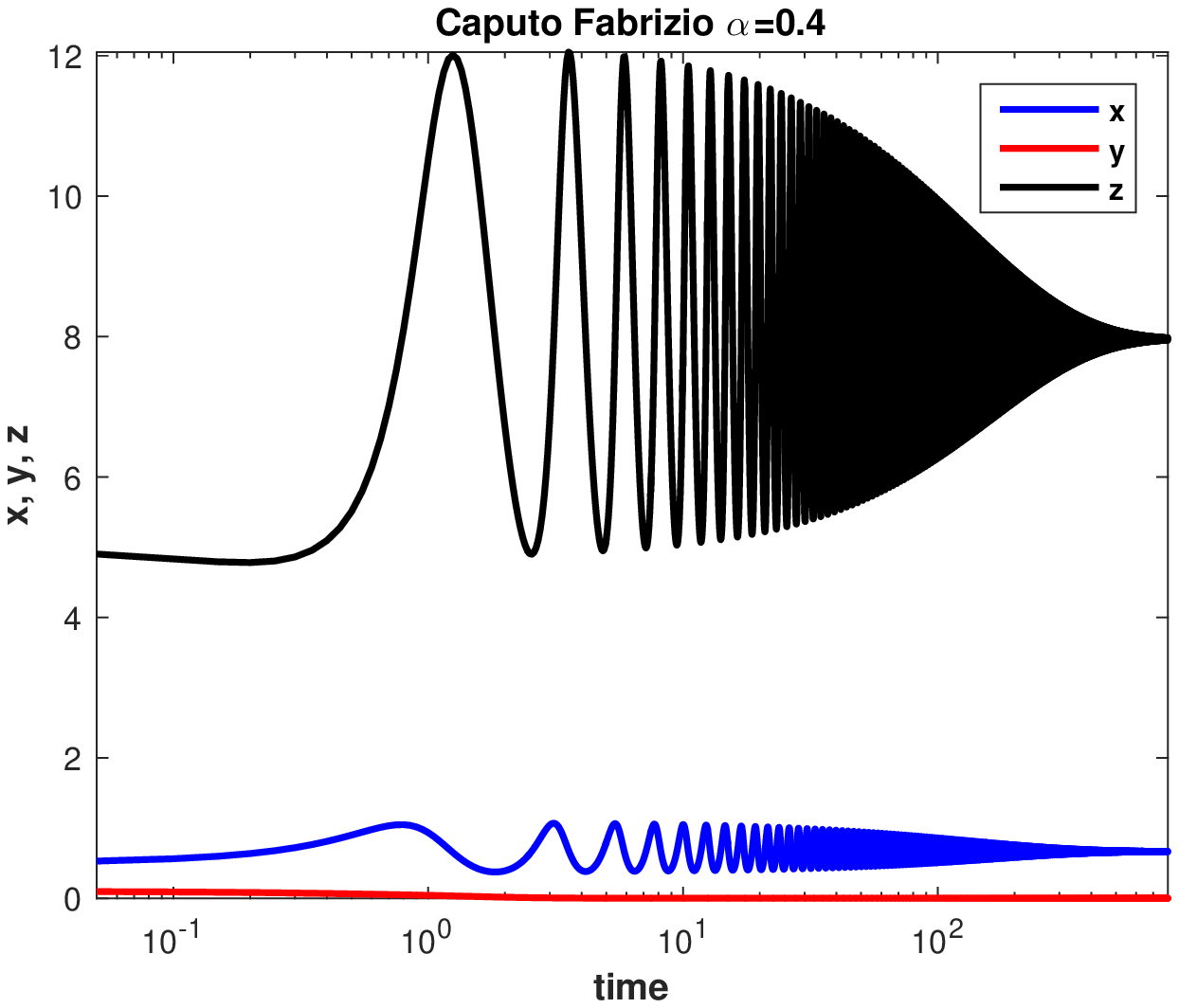} }}%
\caption{System \ref{e1} with the parameters of the \hyperref[tab:table]{Example 3} and $(x_0,y_0,z_0)=(0.5,0.1,5)$ is asymptotically stable for both Caputo (left) and CF (right) at $\epsilon_2$ with different oscillations.}%
\label{fig:example31}%
\end{figure}

\subsection{Example 3}

This example can complete the discussion and make clear the substantial role of the initial values on the behavior of the very Lotka-Volterra model. Considering the information in Table \ref{tab:table}, determining the location of four eigenvalues in the $\lambda_D$ class (Fig.~\ref{fig:compare}), we expect that the system is most stable for the CF operator and noticeably unstable for the Caputo derivative. Although Fig.~\ref{fig:example31} illustrates this expectation, it is not an absolute scenario when the process is supposed to start from a point leading to equilibrium $\epsilon_3$. It depends on the domain of attraction that the initial values stay and the corresponding eigenvalue, which is in the class of $\lambda_B$ (see Fig. \ref{fig:example32}). For more information on finding the domain of attractions to specific equilibrium points,
see \cite{attraction}.

\begin{figure}[!tbp]%
\centering
\subfloat{{\includegraphics[scale=.35]{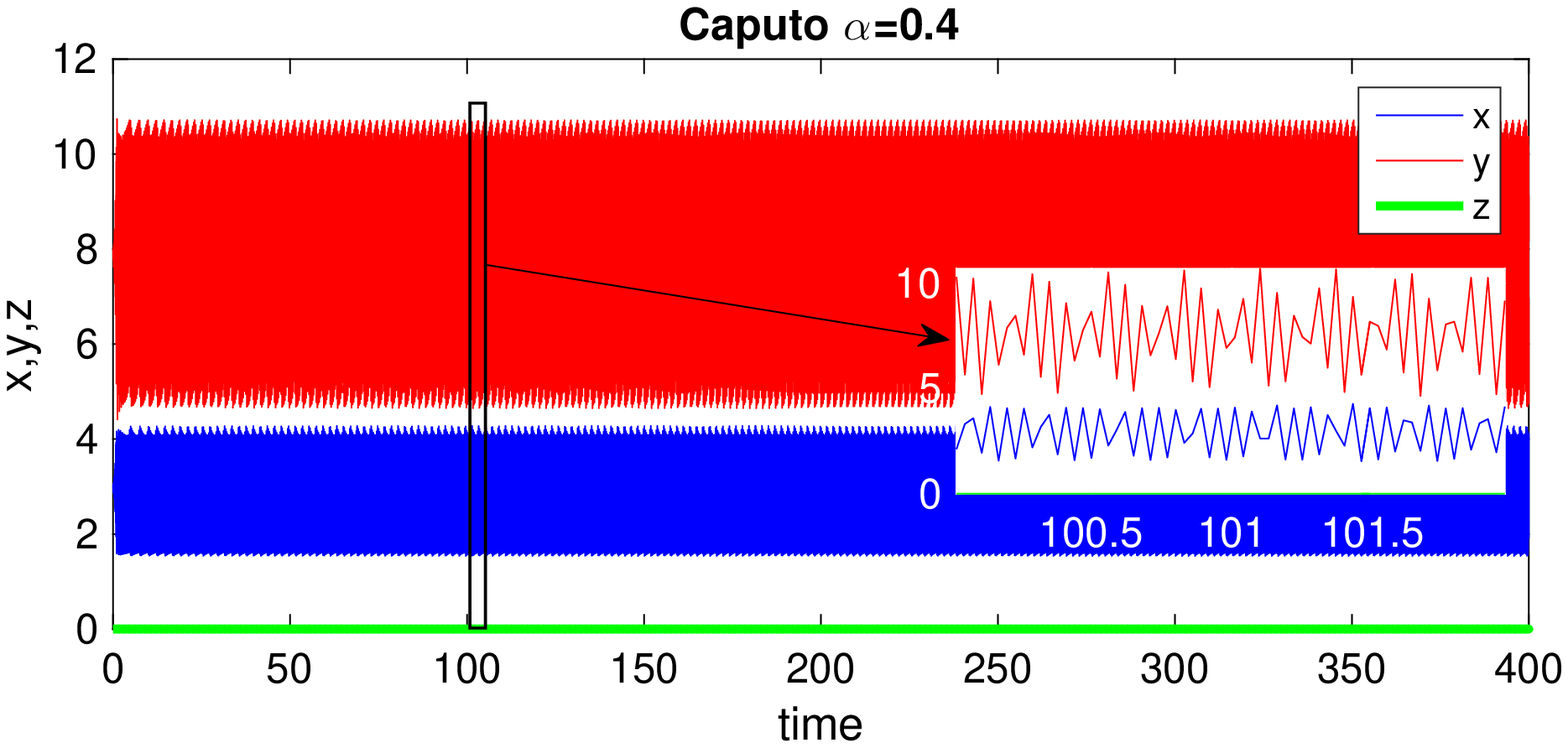} }}%
\subfloat{{\includegraphics[scale=.4]{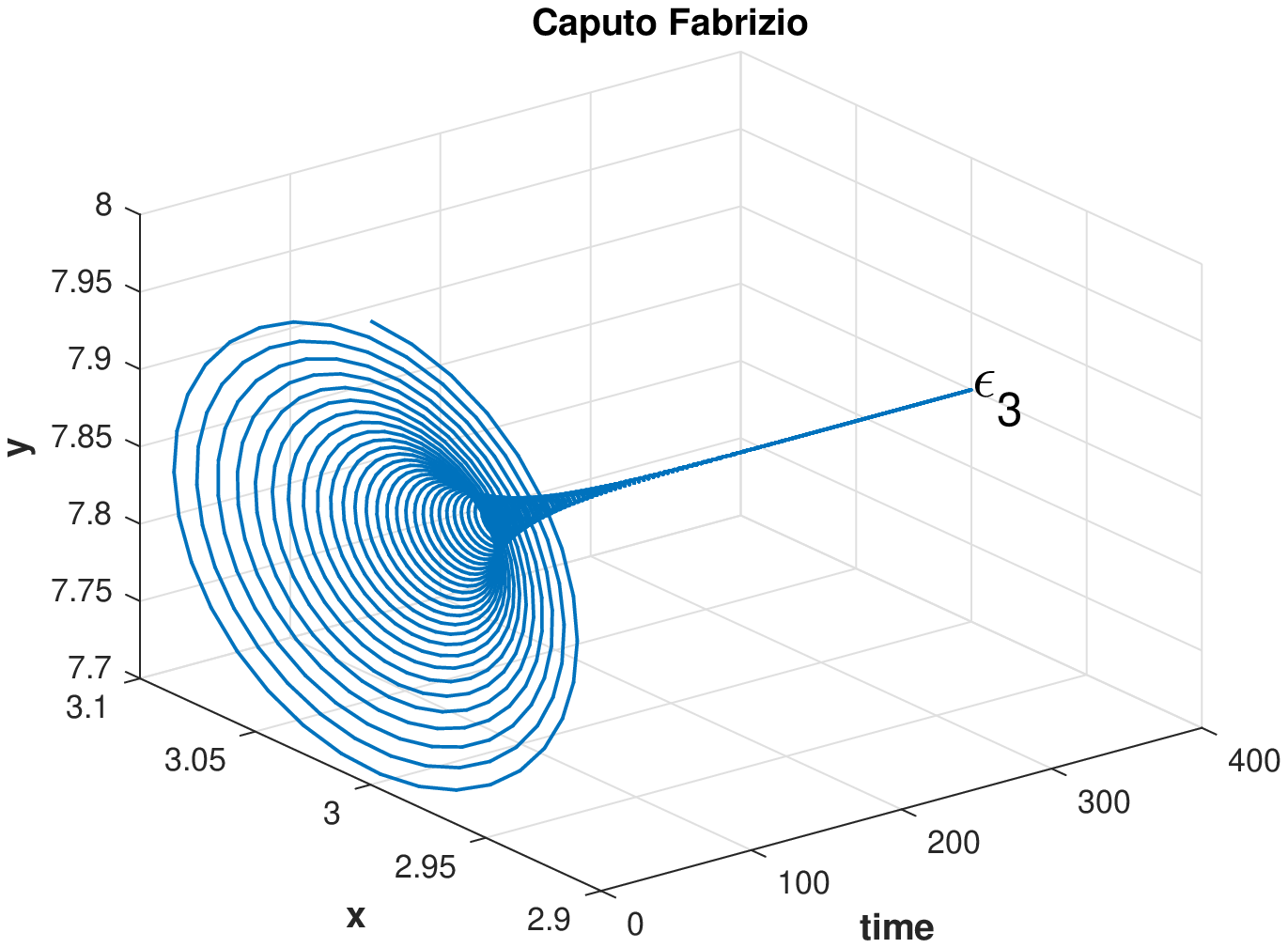} }}%
\caption{System \ref{e1} with the parameters of the \hyperref[tab:table]{Example 3} and $(x_0,y_0,z_0)=(3,8,0)$ is unstable for Caputo (left) and asymptotically stable for CF (right).}%
\label{fig:example32}%
\end{figure}

\section{Conclusion}

We have investigated the three-dimensional Lotka-Volterra system for the CF operator. Concerning the existence of a non-singular kernel in the definition of CF operator,  we investigated the stability of the system and suggested a new numerical method with improved stability properties based on Adams-Bashforth methods. This numerical scheme improves the efficiency of the simulations for both Caputo and CF operators. Numerical results demonstrate how the behavior of the Lotka-Volterra system can depend on the type of differential operator and the value of fractional order. Moreover, we have shown that the CF operator provides different properties compared to the classical Caputo derivative. Overall, this analysis can enhance our understanding of the exceptional dynamics in complex systems.

Analysing the behavior of Lotka-Volterra models under incommensurate fractional orders, where the different interacting partners may have different degrees of memory or lag effects, is a fascinating line for further research. In real-world complex systems, a time-variable dependency on the past states is likely and may lead to anomalous behaviors that pose challenges for modeling. Besides, fractional calculus provides tools to simulate systems with such incommensurate fractional-order derivatives.  Therefore, finding the stability region of the three-dimensional Lotka-Volterra model with incommensurate fractional orders in the sense of Caputo and CF operators as well as examining the domain of attractions are promising directions for future studies. 


\bibliography{ref}
\bibliographystyle{ieeetr}

\end{document}